\documentclass[11pt,reqno]{amsart}
\usepackage[T1]{fontenc}
\usepackage{lmodern}
\usepackage[a4paper,margin=27mm]{geometry}
\usepackage{amsmath,amssymb,amsthm,mathtools}
\usepackage{microtype,booktabs,array,longtable}
\usepackage{placeins}
\usepackage{xcolor}
\definecolor{linkblue}{RGB}{25,60,105}
\usepackage[colorlinks=true,linkcolor=linkblue,citecolor=linkblue,urlcolor=linkblue]{hyperref}
\usepackage[capitalise,nameinlink,noabbrev]{cleveref}
\newcommand{\N}{\mathbb N}
\newcommand{\Z}{\mathbb Z}
\newcommand{\Q}{\mathbb Q}
\newcommand{\F}{\mathbb F}
\newcommand{\C}{\mathbb C}
\newcommand{\T}{\mathbb T}
\newcommand{\E}{\mathbb E}
\newcommand{\one}{\mathbf 1}
\newcommand{\supp}{\operatorname{supp}}

\newcommand{\ord}{\operatorname{ord}}
\newtheorem{theorem}{Theorem}[section]
\newtheorem{proposition}[theorem]{Proposition}
\newtheorem{lemma}[theorem]{Lemma}
\newtheorem{corollary}[theorem]{Corollary}
\theoremstyle{definition}

\theoremstyle{remark}
\newtheorem{remark}[theorem]{Remark}
\numberwithin{equation}{section}
\title[Obstructions to coloring arithmetic graphs]{Obstructions to coloring arithmetic graphs}
\author{Lujia Wang and Ruihua Wang}
\address{Hainan Bielefeld University of Applied Sciences, Danzhou, China}
\email{\{lujia.wang,ruihua.wang\}@hibiuh.edu.cn}
\thanks{Lujia Wang is partially supported by the National Natural Science Foundation of China (grant No.~12371359).}
\date{14 September 2026}
\keywords{Arithmetic graphs, rainbow cascades, chromatic number, translational tilings, logarithms}
\hypersetup{pdftitle={Obstructions to coloring arithmetic graphs},pdfauthor={Lujia Wang and Ruihua Wang}}
\begin{document}
\begin{abstract}
The arithmetic graph $B_n$ joins distinct $a,b\in\N$ when
$\max(a,b)/\gcd(a,b)\le n$. We prove $\chi(B_{205})=206$, disproving
the conjecture that $\chi(B_n)=n$ for every $n$, equivalently the
Rainbow Cascades Conjecture. The proof reduces an arbitrary tiling by
the arithmetic exponent tile to a periodic tiling, then to two families
of finite quotients, which are excluded using exact computations.
We also construct a $208$-coloring using $\Z_{104}\times\Z_2$ and prove
$212\le\chi(B_{211})\le213$. The lower bound at $211$ follows from
prime-cardinality tiling rigidity and the published nonexistence of a
cyclic logarithm of length $211$; we give a direct proof of the required
rigidity statement. Finally, we record the equivalence with the List
Cascade Coloring Conjecture and the conjecture on ironic decorations,
and deduce finite graph counterexamples to both. The least $n$ with
$\chi(B_n)>n$ is either $195$ or $205$; determining which remains open.
\end{abstract}
\maketitle
\section{Introduction}
Write $\N=\{1,2,\ldots\}$ and $[n]=\{1,\ldots,n\}$. A \emph{cascade}
of length $n$ is a set $r[n]=\{r,2r,\ldots,nr\}$, where $r,n\in\N$;
it is \emph{rainbow} if its elements receive pairwise distinct colors.
The arithmetic graph $B_n$ has vertex set $\N$ and adjacency relation
\[
 a\sim b\quad\Longleftrightarrow\quad
 a\ne b,\qquad \frac{\max(a,b)}{\gcd(a,b)}\le n.
\]
Every cascade $r[n]$ is a clique of order $n$, and every edge lies in the
cascade based at the greatest common divisor of its endpoints.
Thus $\chi(B_n)\ge n$, with equality precisely when there is a coloring
$c:\N\to[n]$ satisfying
\begin{equation}\label{eq:cascade-conjecture}
 \{c(r),c(2r),\ldots,c(nr)\}=[n]\qquad(r\in\N).
\end{equation}
Such colorings are called \emph{$n$-satisfactory} in~\cite{CCP}.
Bosek et al.~\cite{Bosek} conjectured that $\chi(B_n)=n$ for every $n\ge1$.
Grytczuk~\cite{Gryt} records the equivalent
\emph{Rainbow Cascades Conjecture}, which asserts that for each $n\ge1$
there is a coloring satisfying~\eqref{eq:cascade-conjecture}.
Pach posed the rainbow question around 2008--2009; it was posted on
MathOverflow by P\'alv\"olgyi in 2010; see~\cite{CCP,Gryt}.

Graham's original conjecture~\cite{Graham}, proved by Balasubramanian and Soundararajan~\cite{BS}, states that any $N$ distinct positive integers $a_1,\ldots,a_N$ satisfy
\begin{equation}\label{eq:graham}
 \max_{i,j}\frac{a_i}{\gcd(a_i,a_j)}\ge N.
\end{equation}
As observed in~\cite{Bosek}, this is equivalent to $\omega(B_n)=n$.
Our counterexamples show that the chromatic number of $B_n$ can exceed its clique number.

A familiar coloring construction assigns weights to prime exponents.
A \emph{cyclic logarithm of length $n$} is a bijection $h:[n]\to\Z_n$, where $\Z_n=\Z/n\Z$ is written additively, such that
\[
 h(ab)=h(a)+h(b)\qquad(ab\le n).
\]
The same definition with a finite abelian group $G$ of order $n$ gives a \emph{group logarithm}.
Forcade and Pollington~\cite{FP} identified $195$ as the first obstruction to the group construction.
Blackburn and McKee~\cite{BM} computed cyclic logarithms and recorded nonexistence at $195,205,208,211$.
Failure of the cyclic construction does not by itself exclude arbitrary colorings, as the positive example at $208$ makes explicit.
Mitchell developed further constructions of satisfactory colorings~\cite{Mitchell2019} and a permutation formulation linking rainbow cascades with hypercube tilings~\cite{Mitchell2023}.

The connection with translational tilings was already noted by Bosek et al.~\cite{Bosek} and developed by Caicedo, Chartier and Pach~\cite{CCP}; we recall it below.
Szegedy~\cite{Szegedy} established periodicity and the existence of
subgroup complements for generating tiles of prime cardinality;
Horak and Kim~\cite{HK} studied these questions by algebraic methods.
For arithmetic exponent tiles, these results make a prime-length
satisfactory coloring equivalent to a cyclic logarithm. The obstruction
at $211$ follows from this equivalence and the computation in~\cite{BM}.
The main new obstruction is at the composite value $205$, where a
reduction from arbitrary tilings to finite quotients is needed.

\begin{theorem}\label{thm:main}
The arithmetic graphs satisfy
\[
 \chi(B_{205})=206,\qquad
 \chi(B_{208})=208,\qquad
 212\le\chi(B_{211})\le213.
\]
If $n_0=\min\{n:\chi(B_n)>n\}$, then
\[
 n_0\in\{195,205\},\qquad 195\le\chi(B_{195})\le196.
\]
\end{theorem}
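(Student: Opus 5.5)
The theorem collects several separate results, and I would prove each one on its own.

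\textbf{Upper bounds.} These come from explicit colorings.
\begin{itemize}
\item For $n=208$, I would exhibit a group logarithm $h:[208]\to\Z_{104}\times\Z_2$, found by computer search and certified by checking additivity on all products $ab\le 208$. I would then extend it completely multiplicatively to all of $\N$, setting $c(m)=h(\text{$208$-smooth part data})$ in the usual way. Concretely, define $c(m)=\sum_p v_p(m)\,h(p)$, with $h(p)$ for primes $p>208$ chosen arbitrarily. Each cascade $r[208]$ then receives the colors $c(r)+h([208])$, a translate of the whole group, so the coloring is rainbow. Together with $\omega(B_{208})=208$ this gives $\chi(B_{208})=208$.
\item For $\chi(B_{205})\le206$, $\chi(B_{211})\le213$ and $\chi(B_{195})\le196$, I would use the monotonicity $\chi(B_n)\le\chi(B_m)$ for $n\le m$, since $B_n$ is a subgraph of $B_m$. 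The $208$-coloring gives $\chi(B_{205})\le208$, which is not sharp enough, so instead I would produce group logarithms at lengths $206$, $213$ and $196$. Alternatively, I would construct a coloring for $B_n$ with $n+1$ colors by taking a logarithm of length $n+1$ and restricting it. I expect the literature (Blackburn--McKee) to record existence at $206$, $212$/$213$ and $196$, and I would verify the needed tables directly.
\end{itemize}

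\textbf{Lower bounds.} All lower bounds go through the tiling reformulation.
\begin{itemize}
\item An $n$-satisfactory coloring corresponds to a translational tiling of $\Z^{\pi(n)}$, or more precisely of the exponent lattice $\bigoplus_p\Z$, by the exponent tile $T_n=\{(v_p(k))_p : k\in[n]\}$. Translates of $T_n$ correspond to the cascades, and color classes correspond to tilings.
\item For $n=211$, which is prime, I would prove rigidity in the style of Szegedy. A tiling by a prime-cardinality tile generating the lattice is periodic and admits a complementing subgroup $H$ of index $211$. The quotient map then induces a bijection $T_{211}\to\Z_{211}$ that is additive on products, that is, a cyclic logarithm. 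The key step is the Tijdeman/Szegedy dilation argument: for $q$ coprime to $|T|$, the dilate $qT$ also tiles with the same complement, and for $q=211$ one passes to the characteristic polynomial modulo $211$. Since Blackburn--McKee show no cyclic logarithm of length $211$ exists, this gives $\chi(B_{211})\ge212$.
\item For $n=205=5\cdot41$, rigidity fails, and this is the main obstacle. My plan has three steps.
  \begin{enumerate}
  \item Reduce an arbitrary tiling to a periodic one, using the fact that the tile is finite in a finitely generated lattice. Restrict to the coordinates of the primes $\le205$, and use compactness together with the Bhattacharya periodicity theorem, extended via a dilation argument for exponents coprime to $205$, to obtain a periodic tiling.
  \item Apply the dilation trick with primes $q\nmid205$ to show that the period lattice can be chosen to contain $5\cdot41$-multiples appropriately. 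This reduces the problem to tilings of finite quotients $\Z^{\pi(205)}/L$ in which the quotient has exponent involving only $5$ and $41$.
  \item Show that such a quotient must fall into one of two families, according to which Sylow part absorbs the structure. Exclude each family by exact computation, namely an exhaustive search or SAT/ILP on the finite tiling problem.
  \end{enumerate}
\end{itemize}

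\textbf{Least counterexample $n_0$.}
\begin{itemize}
\item For $n<195$, Forcade--Pollington supply group logarithms, so $\chi(B_n)=n$.
\item For $195<n<205$, I would cite or compute logarithms at $196,\dots,204$.
\item Hence $n_0\in\{195,205\}$, using $\chi(B_{205})>205$.
\item The bound $195\le\chi(B_{195})\le196$ follows from the clique number together with the $196$-coloring.
\end{itemize}

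The hardest step is the reduction at $205$, in particular obtaining periodicity and a quotient structure that is small enough to compute.
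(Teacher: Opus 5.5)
Your upper bounds, the $208$-coloring, the prime-rigidity route at $211$, and the bookkeeping for $n_0$ all follow the paper. (At $211$ the paper uses a logarithm of length $213$; a group logarithm of length $212$ would already give $\chi(B_{211})=212$, which the paper leaves open.) The gap is the lower bound $\chi(B_{205})\ge206$, which is the heart of the theorem. Neither of your two key steps there contains the idea that makes it work.

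\emph{Periodicity.} Bhattacharya's theorem concerns tilings of $\Z^2$. Here $T$ lives in $\Z^{46}$, and the periodic tiling conjecture fails in high dimension \cite{GT}. Compactness gives only an invariant measure, not a period. Coprime dilations give $S_kf=1$ for $(k,205)=1$ but no periods. The Frobenius argument that yields $q\Z^d$-periodicity at $211$ needs $|T|$ prime. The paper's route is specific:
\begin{enumerate}
\item The spectral theorem shows that every character $z$ in the spectral measure makes $z(T)$ either five complete $41$-gons or $41$ complete $5$-gons. This gives $f=gh$ with $g=S_5f/5$, $h=S_{41}f/41$, and a rectangle identity.
\item Genuine periods $492e_2,492e_3,492e_5$ of $h$ come from \cref{lem:205-phases}. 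That lemma bounds the orders of $z_r^{41}$ for $r=2,3,5$ and itself rests on the computations \cref{lem:205-computations}(i),(ii).
\item \cref{lem:205-orbit}, using the $46>41$ labels in $\Z e_2+\Z e_3+\Z e_5$, upgrades these to all of $41\Z^d$ and builds a periodic replacement.
\end{enumerate}

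\emph{Finite search.} Suppose periodicity is granted and a coprime dilation kills the other primary components. The quotient is still $P\times Q$ with $P$ a $5$-group and $Q$ a $41$-group of unbounded order. So ``exhaustive search or SAT/ILP'' faces infinitely many instances, and your Step~3 does not terminate. The paper needs three further ideas:
\begin{enumerate}
\item A Fourier argument giving cyclic coordinates $\Z_{5^a}\times\Z_{41^b}$ with an interval-box complement, plus a separation argument forcing $a=1$ or $b=1$ (\cref{lem:205-pq-quotient}).
\item For $\Z_5\times\Z_{41^b}$: the values $z_2^{41},z_3^{41},z_5^{41}$ have order at most $4$ and dividing a power of $41$, hence are trivial. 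This turns the image into a forbidden logarithm of length $205$, for every $b$.
\item For $\Z_{5^a}\times\Z_{41}$: a proof that the fiber-difference lattice $D$ has invariant factors of $5$-adic valuation at most $2$. This uses the saturation check (iii), the height catalog (iv) with width at most $58$, and cycle relations with coefficients $41\ell$, $0<|\ell|\le58<125$. It lets one compress $u$ into $\Z_{125}$, where computation (i) applies.
\end{enumerate}
Some mechanism bounding $a$ and $b$ is indispensable before any computer search can decide the question.
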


The paper is organized as follows. \Cref{sec:tiling} recalls the
connection between colorings, logarithms and exponent tilings.
\Cref{sec:prime} proves the prime-cardinality criterion and applies it
to the obstruction at $211$. \Cref{sec:205} proves the counterexample
at $205$, with the finite computational inputs and their verification
given in \cref{lem:205-computations} and the paragraphs following it.
\Cref{sec:208} gives the noncyclic coloring at $208$.
\Cref{sec:decorations} records the equivalence with the other coloring
conjectures using Kostochka's complete multipartite
construction~\cite{Kostochka}, compactness and a degree adjustment.
\Cref{sec:conclusion} discusses open questions.
Finally, \cref{app:positive} lists the weights for the upper bounds at
$205$ and $211$.

\section{Colorings, logarithms and exponent tilings}\label{sec:tiling}
Fix $n$, and let $r_1,\ldots,r_d$ be the primes at most $n$.
For a prime $r$, let $v_r(m)$ denote its exponent in the factorization
of $m$. Put
\[
 t(m)=(v_{r_1}(m),\ldots,v_{r_d}(m)),\qquad
 T_n=\{t(m):m\in[n]\}.
\]
The set $T_n$ has $n$ elements and contains $0$ and every coordinate unit vector.
An $n$-smooth number is an integer whose prime factors are all at most $n$; exponent vectors identify these integers with $\Z_{\ge0}^d$.

The following standard equivalence combines the smooth-number and tiling formulations of Caicedo, Chartier and Pach~\cite[Propositions~19, 25 and~26]{CCP}; see also~\cite{Bosek}.
\begin{samepage}
\begin{proposition}[Coloring--tiling correspondence]\label{prop:tiling}
The following are equivalent:
\begin{enumerate}
\renewcommand{\theenumi}{\roman{enumi}}
\renewcommand{\labelenumi}{\textup{(\theenumi)}}
\item $B_n$ has a proper $n$-coloring;
\item the graph on $\Z^d$ with difference set $(T_n-T_n)\setminus\{0\}$ has a proper $n$-coloring;
\item $T_n$ tiles $\Z^d$ by translations: there is $C\subset\Z^d$ such that every $x\in\Z^d$ has exactly one representation $x=t+c$ with $t\in T_n$, $c\in C$.
\end{enumerate}
\end{proposition}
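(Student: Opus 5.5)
I will prove the cycle (i)$\Rightarrow$(ii)$\Rightarrow$(iii)$\Rightarrow$(i), with the $n$-smooth numbers $S\cong\Z_{\ge0}^d$ as the intermediate object. The key fact used throughout is that $t$ is an isomorphism of multiplicative monoids $S\to\Z_{\ge0}^d$. Hence every cascade $r[n]$ with $r\in S$ corresponds to the translate $t(r)+T_n$. Moreover, two smooth numbers $a,b$ are adjacent in $B_n$ exactly when $t(a)-t(b)\in(T_n-T_n)\setminus\{0\}$. Indeed, writing $a=gu$, $b=gv$ with $g=\gcd(a,b)$, adjacency means $u,v\le n$, so both lie in $[n]$. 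Conversely, if $t(a)-t(b)=t(u)-t(v)$ with $u,v\in[n]$, then $av=bu$. After cancelling $\gcd(u,v)$ we get $a/\gcd(a,b)\le u\le n$, and similarly for $b$.

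For (i)$\Rightarrow$(ii), I restrict a proper $n$-coloring of $B_n$ to $S$ and transport it to $\Z_{\ge0}^d$. This gives a proper coloring of the difference graph on the orthant. To pass to $\Z^d$, I use compactness (De Bruijn--Erd\H{o}s). Every finite subset $F\subset\Z^d$ can be translated into the orthant by a vector $s$. The difference graph is translation invariant, so $F+s$ is colored properly, and so $F$ is. Compactness then gives a proper coloring of all of $\Z^d$.

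For (ii)$\Rightarrow$(iii), note that each translate $x+T_n$ is a clique of order $n$, since all differences lie in $T_n-T_n$. Hence each translate is rainbow: every color appears exactly once on it. I take $C=c^{-1}(1)$, writing $y=x+t$ for the element of color $1$.

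Given $x\in\Z^d$, I consider the reflected translate $x-T_n$. It is also a clique (differences $-(T_n-T_n)=T_n-T_n$), so it contains exactly one point of color $1$. That means there is exactly one $t$ with $x-t\in C$, i.e.\ a unique representation $x=t+c$. Uniqueness also follows because two representations $t+c=t'+c'$ give $c-c'\in T_n-T_n$, and two distinct same-colored points cannot differ by such a vector.

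For (iii)$\Rightarrow$(i), given the tiling $T_n\oplus C=\Z^d$, I color $x\in\Z^d$ by the unique $t\in T_n$ with $x-t\in C$, using $n$ colors. Suppose $x\ne y$, $x-y\in T_n-T_n$, and both have the same color $t$. Then $x-t$ and $y-t$ are distinct elements of $C$ differing by $u-v$ with $u,v\in T_n$. This gives two representations $(x-t)+u=(y-t)+v$ of the same point, contradicting uniqueness. So the coloring is proper.

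Next I color $S$ via $t$. A general $m\in\N$ factors uniquely as $m=m_0m_1$, with $m_0$ smooth and $m_1$ having all prime factors $>n$; I set $c(m)=c(t(m_0))$. If $a\sim b$, then $a/g,b/g\le n$ are smooth, so $a,b$ have the same non-smooth part. Their smooth parts are then adjacent, so the colors differ.

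The main obstacle is the compactness step in (i)$\Rightarrow$(ii), i.e.\ passing from the orthant to all of $\Z^d$. I will present it via finite subgraphs and the De Bruijn--Erd\H{o}s theorem. Alternatively, a Tychonoff argument on $[n]^{\Z^d}$ with the shifted colorings $c(\cdot+s_k)$, $s_k\to\infty$ in every coordinate, and a convergent subsequence gives the same result.
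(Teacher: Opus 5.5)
Your proof is correct and follows essentially the route the paper relies on. The paper gives no argument of its own here; it cites the smooth-number and tiling formulations of Caicedo, Chartier and Pach~\cite[Propositions~19, 25 and~26]{CCP}, and your cycle supplies exactly those reductions in self-contained form: extension from the $n$-smooth numbers via the non-smooth part, compactness to pass from $\Z_{\ge0}^d$ to $\Z^d$, and one color class of a coloring with rainbow translates of $T_n$ serving as the tiling complement. Your verification that smooth $a,b$ are adjacent in $B_n$ exactly when $t(a)-t(b)\in(T_n-T_n)\setminus\{0\}$, which the citation leaves implicit, is also handled correctly.
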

\end{samepage}

We write $T_n\oplus C=\Z^d$ for the unique-representation property in
\cref{prop:tiling}, and call $C$ a \emph{tiling complement}.
A tiling complement is \emph{periodic} if $C+L=C$ for some subgroup
$L$ of finite index in $\Z^d$. A \emph{lattice tiling} is one whose
complement is a coset of a subgroup; by translation we may take the
complement to be the subgroup itself. Its index is $n$, with $T_n$ as
a complete set of coset representatives. A periodic complement may
instead be a union of several cosets of a period subgroup.

The group construction is also standard; see~\cite[Theorem~61 and Lemma~74]{CCP}.
\begin{lemma}[Group logarithm construction]\label{lem:group}
A group logarithm $h:[n]\to G$ gives a proper $n$-coloring of $B_n$.
Equivalently, a homomorphism $\phi:\Z^d\to G$ that is bijective on $T_n$ gives a tiling $T_n\oplus\ker\phi=\Z^d$.
\end{lemma}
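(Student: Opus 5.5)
Both assertions of \cref{lem:group} follow by direct verification; the only care needed is in extending $h$ to a homomorphism on exponent vectors. Given a group logarithm $h:[n]\to G$, I define $\phi:\Z^d\to G$ on the coordinate unit vectors by $\phi(e_i)=h(r_i)$, which is legitimate because each prime $r_i\le n$ lies in $[n]$, and extend additively. By induction on the number of prime factors, using $h(ab)=h(a)+h(b)$ whenever $ab\le n$, I get $\phi(t(m))=h(m)$ for every $m\in[n]$. Writing $m=r_i m'$ with $m'\le n$, we have $\phi(t(m))=h(r_i)+h(m')=h(m)$. Since $h$ is a bijection onto $G$, $\phi$ is bijective on $T_n$. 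Conversely, a homomorphism bijective on $T_n$ restricts via $m\mapsto\phi(t(m))$ to a group logarithm, so the two formulations in the lemma match.

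\textbf{Step 1: the tiling.} Let $\phi$ be a homomorphism bijective on $T_n$, with $|G|=n=|T_n|$, and put $K=\ker\phi$. For $x\in\Z^d$ there is a unique $t\in T_n$ with $\phi(t)=\phi(x)$, and then $x-t\in K$. This gives existence of a representation. For uniqueness, suppose $t+k=t'+k'$ with $t,t'\in T_n$ and $k,k'\in K$. Applying $\phi$ gives $\phi(t)=\phi(t')$, so $t=t'$ by injectivity on $T_n$, and then $k=k'$. Hence $T_n\oplus K=\Z^d$.

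\textbf{Step 2: the coloring.} By \cref{prop:tiling}, condition (iii) implies (i), so $B_n$ has a proper $n$-coloring. For concreteness, I would also write the coloring directly as $c(a)=\phi(t(a))\in G$ for $a\in\N$; it extends $h$ completely additively to all of $\N$. For a cascade $r[n]$ we have $c(rj)=c(r)+h(j)$ for $j\in[n]$. These values are distinct because $h$ is injective, so every cascade is rainbow, which is exactly~\eqref{eq:cascade-conjecture} with $[n]$ identified with $G$. Since every edge of $B_n$ lies in some cascade, $c$ is proper.

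\textbf{Main obstacle.} The only delicate point is the well-definedness of the extension. It is handled by defining $\phi$ freely on the basis of $\Z^d$ and checking compatibility with $h$ only on $[n]$, which is where the inductive argument of the first paragraph is needed.
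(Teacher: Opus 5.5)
Your argument is correct and follows the paper's route: the paper cites \cite{CCP} and records the explicit additive coloring $c(a)=\sum_{r\le n}v_r(a)h(r)$, checking its properness elsewhere exactly as you do ($c(rj)=c(r)+h(j)$ on each cascade, equivalently $c(gx)-c(gy)=h(x)-h(y)\ne0$), and your coset-representative verification of $T_n\oplus\ker\phi=\Z^d$ spells out what the paper leaves to the citation. The one unstated detail is the base case $h(1)=0$ of your induction, which follows from $h(1)=h(1\cdot 1)=2h(1)$.
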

The resulting coloring is given explicitly by
\[
 c(a)=\sum_{r\le n,\ r\text{ prime}}v_r(a)h(r)\qquad(a\in\N).
\]

\section{The prime case and the obstruction at 211}\label{sec:prime}
The following theorem is an arithmetic form of the prime-cardinality
tiling results of Szegedy~\cite{Szegedy} and Horak and Kim~\cite{HK}.
For completeness, we include a direct proof.
\begin{theorem}\label{thm:prime}
Let $q$ be prime, and let $\F_q$ be the field with $q$ elements.
The following statements are equivalent:
\begin{enumerate}
\renewcommand{\theenumi}{\roman{enumi}}
\renewcommand{\labelenumi}{\textup{(\theenumi)}}
\item $B_q$ admits a proper coloring with $q$ colors;
\item a logarithm of length $q$ exists;
\item there are coefficients $w_r\in\F_q$, indexed by primes $r\le q$, such that
\begin{equation}\label{eq:linear}
 h(m)=\sum_{\substack{r\le q\\r\text{ prime}}}w_r v_r(m)
\end{equation}
is a bijection from $[q]$ onto $\F_q$.
\end{enumerate}
Moreover, every coloring in \textup{(i)} satisfies
\begin{equation}\label{eq:periodcolor}
 c(am^q)=c(a)\qquad(a\in\N,\ 1\le m\le q).
\end{equation}
\end{theorem}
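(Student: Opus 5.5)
The implications (ii)$\Leftrightarrow$(iii)$\Rightarrow$(i) are formal. A logarithm $h:[q]\to\Z_q=\F_q$ is completely multiplicative on $[q]$. Setting $w_r=h(r)$ therefore gives $h(m)=\sum_r w_r v_r(m)$ for $m\le q$, since every $m\le q$ factors into primes $\le q$ with all partial products $\le q$. Conversely, a linear form \eqref{eq:linear} that is bijective on $[q]$ is additive whenever $ab\le q$, so it is a logarithm. Finally, (iii)$\Rightarrow$(i) is \cref{lem:group} with $G=\F_q$.

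The substance is (i)$\Rightarrow$(iii) together with \eqref{eq:periodcolor}. By \cref{prop:tiling}, a coloring gives a tiling $T_q\oplus C=\Z^d$. I would use the polynomial (Fourier) method. Put $P(x)=\sum_{t\in T_q}x^t$, and view $\one_C$ as a formal series. The tiling identity says $P\cdot \one_C=\sum_{x\in\Z^d}x^x$, which is invariant under every translation.

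\textbf{Key step (Tijdeman-type dilation).} For any integer $k$ coprime to $q$, $kT_q\oplus C=\Z^d$ as well. The standard proof reduces mod $q$. In characteristic $q$ we have $P(x)^q=P(x^q)$ by Frobenius. Comparing $P^{q}\one_C$ with $P\cdot(\text{everything})$ shows that $P(x^q)\one_C$ counts each point with multiplicity $\equiv q^{d'}\cdot$(const)$\equiv 0$ structure. The cleaner route: the counting function $N_k(x)=\#\{(t,c):x=kt+c\}$ satisfies $N_k\equiv N_1=1\pmod q$ for $k\equiv1\pmod q$, since $P(x^k)\equiv P(x)^{k}$ modulo $q$ only when $k$ is a power of $q$. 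So I would take $k=q^j$ and then use that $N_k\le q$ is nonnegative with total density $1$. The relation $N_{q^j}\equiv q^{\,\cdot}\cdot\ldots$ needs care, and this is the step I expect to be the main obstacle. I would follow Szegedy's argument: show that $qT_q$, i.e.\ the translates by $q\cdot(T_q-T_q)$, are periods. Concretely, I aim to prove $C+q(t-t')=C$ for all $t,t'\in T_q$, which is equivalent to \eqref{eq:periodcolor} via $t(am^q)=t(a)+q\,t(m)$, since $0\in T_q$.

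Given periodicity by $L_0=q\Z^d$, the tiling descends to a tiling of the finite group $\F_q^d$ by the image $\bar T$ of $T_q$. The image has $q$ points because the tiling is exact modulo any period. Now let $\bar C\subset\F_q^d$ be the complement, so $|\bar C|=q^{d-1}$. Taking characters $\chi_u(x)=\zeta^{u\cdot x}$ of $\F_q^d$, we get $\hat{\one}_{\bar T}(u)\hat{\one}_{\bar C}(u)=0$ for $u\neq0$. Since $\hat{\one}_{\bar C}$ cannot vanish at all $u\ne0$ (otherwise $\bar C$ would be everything), some $u\ne0$ satisfies $\sum_{t\in T_q}\zeta^{u\cdot t}=0$. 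A vanishing sum of exactly $q$ $q$-th roots of unity must consist of each root exactly once, because the minimal polynomial of $\zeta$ is $1+x+\dots+x^{q-1}$. Hence $t\mapsto u\cdot t$ is a bijection $T_q\to\F_q$, and $w_r=u_r$ gives (iii).
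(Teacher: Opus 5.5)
There is a genuine gap: the periodicity step, which is the core of both (i)$\Rightarrow$(iii) and \eqref{eq:periodcolor}, is stated as a goal ($C+q(t-t')=C$) but never proved. The rest is fine. Your handling of (ii)$\Leftrightarrow$(iii)$\Rightarrow$(i) is correct. So is the last stage: descent to $\F_q^d$, a nonzero frequency with $\widehat{\one}_{\bar C}\ne0$, and the cyclotomic argument showing each $q$th root of unity occurs once. That stage is essentially the paper's argument.

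The route you sketch toward periodicity does not get there. Dilation by integers $k$ coprime to $q$ (Tijdeman, Horak--Kim) says $kT_q\oplus C=\Z^d$, which does not by itself give any period of $C$. Your justification of $N_k\equiv N_1\pmod q$ for $k\equiv1\pmod q$ is unsupported: the congruence $P(x^k)\equiv P(x)^k\pmod q$ it would need holds only for powers of $q$, as you note yourself. And $k=q^j$ is not coprime to $q$, so it does not fit the ``key step'' either.

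The missing idea is to apply Frobenius with exponent $q$ itself to the complement indicator, which your sentence about $P^q\one_C$ nearly says. Let $f=\one_{-C}$ and $R=\sum_{t\in T_q}U_t$ with $(U_tf)(x)=f(x+t)$. Then $Rf=1$ and $R1=q$, so $R^qf=q^{q-1}\equiv0\pmod q$. Since the $U_t$ commute and the mixed multinomial coefficients are divisible by $q$, we have $R^q\equiv\sum_tU_{qt}\pmod q$. Hence $\sum_{t\in T_q}f(x+qt)$ is a sum of $q$ values in $\{0,1\}$ that is divisible by $q$, so it is $0$ or $q$. The term $t=0$ is $f(x)$, so every term equals $f(x)$. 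This gives $f(x+qt)=f(x)$ for all $x$ and $t$, and therefore the period lattice $q\Z^d$, because $T_q$ contains all unit vectors.

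Also, \eqref{eq:periodcolor} concerns every coloring $c$ of $\N$, including at non-smooth $a$. It is not a statement about some complement in $\Z^d$ produced by \cref{prop:tiling}, which loses the original coloring. Proving periodicity of such a $C$ is therefore not ``equivalent'' to \eqref{eq:periodcolor}. The paper runs the same Frobenius argument directly on each color indicator on $\N$. It uses dilation operators $D_m$ and $S=D_1+\cdots+D_q$, with $Sf=1$, $S^qf=q^{q-1}$ and $S^q\equiv\sum_mD_{m^q}\pmod q$. This yields $f(am^q)=f(a)$ for every $a\in\N$, and only then descends to $\F_q^d$.
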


\begin{proof}
Suppose first that $c$ is a proper $q$-coloring. Fix a color and let
$f:\N\to\{0,1\}$ be its indicator. Since every cascade contains this
color once,
\begin{equation}\label{eq:cascade}
 \sum_{m=1}^{q}f(am)=1\qquad(a\in\N).
\end{equation}
For $m\in\N$, define the dilation operator $D_m$ by $(D_mf)(a)=f(am)$,
and put $S=D_1+\cdots+D_q$. These operators commute, with
$D_mD_k=D_{mk}$. Equation~\eqref{eq:cascade} says $Sf=1$; since
$S1=q$, we obtain $S^qf=q^{q-1}$.
In the multinomial expansion of $S^q$, every mixed coefficient is
divisible by the prime $q$. Reduction modulo $q$ therefore gives
\[
 S^q\equiv\sum_{m=1}^q D_m^q
      =\sum_{m=1}^q D_{m^q}\pmod q.
\]
Applying this congruence to $f$ yields
$\sum_{m=1}^q f(am^q)\equiv0\pmod q$. This sum consists of exactly
$q$ values in $\{0,1\}$, so it equals either $0$ or $q$.
All summands are therefore equal, and the term $m=1$ is $f(a)$.
Hence $f(am^q)=f(a)$ for every $a\in\N$ and $1\le m\le q$.
Applying the argument to each color proves~\eqref{eq:periodcolor}.

Take $n=q$ in the notation of \cref{sec:tiling}. On the $q$-smooth numbers,
the identity just proved makes each prime exponent periodic modulo $q$.
Consequently the color indicator descends to a function
$F:V\to\{0,1\}$ on $V=\F_q^d$, and~\eqref{eq:cascade} becomes
\begin{equation}\label{eq:torus}
 \sum_{m=1}^q F(x+t(m))=1\qquad(x\in V),
\end{equation}
where exponent vectors are reduced modulo $q$. The function $F$ is
nonconstant, since the cascade at $1$ contains the chosen color and
$q-1$ other colors.

Set $\zeta=e^{2\pi i/q}$, a primitive $q$th root of unity. For $\xi\in V$,
use the Fourier transform
\[
 \widehat F(\xi)=\sum_{x\in V}F(x)\zeta^{-\xi\cdot x}.
\]
Since $F$ is nonconstant, Fourier inversion supplies a nonzero $\xi$
with $\widehat F(\xi)\ne0$. Taking the Fourier coefficient of
\eqref{eq:torus} at this frequency, the constant right-hand side
contributes zero. Dividing by $\widehat F(\xi)$ gives
\begin{equation}\label{eq:rootsum}
 \sum_{m=1}^q\zeta^{\xi\cdot t(m)}=0.
\end{equation}
For $j\in\{0,\ldots,q-1\}$, let $N_j$ count the integers $m\in[q]$
for which $\xi\cdot t(m)=j$ in $\F_q$, and put
$A(X)=\sum_{j=0}^{q-1}N_jX^j$. Then $A(\zeta)=0$
by~\eqref{eq:rootsum}. Since the minimal polynomial of $\zeta$ over
$\Q$ is $\Phi_q(X)=1+X+\cdots+X^{q-1}$ and $\deg A\le q-1$,
the polynomial $A$ is a constant multiple of $\Phi_q$. Evaluating at
$1$ gives $A(1)=q=\Phi_q(1)$, so $A=\Phi_q$ and thus
$N_0=N_1=\cdots=N_{q-1}=1$. The map $h(m)=\xi\cdot t(m)$ is
consequently bijective on $[q]$. Valuation additivity gives
$h(ab)=h(a)+h(b)$ for $ab\le q$. This proves
\textup{(i)}$\Rightarrow$\textup{(iii)}$\Rightarrow$\textup{(ii)}.

Conversely, a logarithm has $h(1)=0$ and, by repeated factorization
within $[q]$, has the form~\eqref{eq:linear} with $w_r=h(r)$.
Given these coefficients, define on all positive integers
\[
 c(a)=\sum_{\substack{r\le q\\r\text{ prime}}}w_r v_r(a)\pmod q.
\]
If $a\sim b$, write $a=gx$ and $b=gy$, where $g=\gcd(a,b)$ and
distinct $x,y\in[q]$. Then $c(a)-c(b)=h(x)-h(y)\ne0$.
Thus $c$ is a proper $q$-coloring, proving the remaining implication.
\end{proof}

\begin{remark}
The theorem constructs a multiplicative coloring from the existence of an arbitrary coloring. It does not assert that the original coloring itself is multiplicative. Primality is used both in the operator congruence and in the cyclotomic argument.
\end{remark}

\begin{corollary}\label{cor:211}
The graph $B_{211}$ satisfies $212\le\chi(B_{211})\le213$.
\end{corollary}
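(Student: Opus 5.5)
The corollary has two halves, and I will prove them separately.

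\emph{Lower bound.} Since $211$ is prime, \cref{thm:prime} applies with $q=211$. If $\chi(B_{211})=211$, then (i) gives (ii): a cyclic logarithm of length $211$, that is, a bijection $h:[211]\to\Z_{211}$ with $h(ab)=h(a)+h(b)$ whenever $ab\le211$. Blackburn and McKee~\cite{BM} computed cyclic logarithms and recorded that none exists at length $211$. Hence $\chi(B_{211})\ne211$. Combined with $\chi(B_{211})\ge\omega(B_{211})=211$, coming from the cascade cliques, this gives $\chi(B_{211})\ge212$.

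The only subtlety is matching definitions. A group logarithm into a group of prime order $211$ is automatically cyclic, so the computation in~\cite{BM} is exactly the needed input. If one prefers not to rely on the published table, condition (iii) reduces the question to a finite search over weights $w_r\in\F_{211}$. By scaling, $w_2$ can be normalised to $1$ whenever it is nonzero, and $w_2=0$ is impossible since then $h(1)=h(2)$. That search is what~\cite{BM} carried out.

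\emph{Upper bound.} I would exhibit an explicit proper $213$-coloring of $B_{211}$. The natural route is to find a group logarithm-type construction for a slightly larger cascade length. Concretely, I would look for weights $h(r)\in\Z_{213}$ on the primes $r\le211$ such that the induced map $m\mapsto\sum_r v_r(m)h(r)$ is injective on $[211]$. Here injective into $\Z_{213}$ is enough, and bijectivity is not needed. The coloring $c(a)=\sum_r v_r(a)h(r)\bmod 213$ is then proper, by the same argument as the last paragraph of the proof of \cref{thm:prime}: for adjacent $a=gx$ and $b=gy$ with distinct $x,y\in[211]$, we get $c(a)-c(b)=h(x)-h(y)\ne0$.

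Verifying such weights is a finite check on $211$ values. According to the introduction, the weights are listed in \cref{app:positive}, so the proof would simply cite that list and the injectivity check. Equivalently, a cyclic logarithm of length $213$ restricted to $[211]$ would serve.

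\emph{Main obstacle.} The hard part is not the logic but the computational inputs. The lower bound depends on the nonexistence result of~\cite{BM}, and the upper bound depends on actually finding weights that are injective modulo $213$. I expect $212$ to fail, which is why the bound stops at $213$. To find the weights, I would first try a search constrained by the multiplicative structure: choose $h(2),h(3),h(5),h(7)$ first, since they determine most small values, then extend greedily over the larger primes, each of which only needs to avoid the values already used.
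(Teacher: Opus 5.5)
Your proof is correct and matches the paper's: the lower bound combines \cref{thm:prime} at $q=211$ with the Blackburn--McKee nonexistence result and the clique $[211]$, and the upper bound uses the $\Z_{213}$ weights of \cref{app:positive}. The only differences are minor. You note that injectivity on $[211]$ already suffices, whereas the paper uses bijectivity on $[213]$, \cref{lem:group} and $B_{211}\subseteq B_{213}$. Also, your aside that $212$ colors should fail is not established: the paper leaves $\chi(B_{211})\in\{212,213\}$ open.
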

\begin{proof}
Blackburn and McKee~\cite[\S9.3]{BM} report that no logarithm of length $211$ exists.
\cref{thm:prime} therefore excludes every $211$-coloring.
The prime weights in \cref{app:positive} define a logarithm of length $213$:
their values on $[213]$ are the $213$ distinct residues in $\Z_{213}$.
By \cref{lem:group} it colors $B_{213}$, hence its subgraph $B_{211}$, with $213$ colors.
\end{proof}

\section{The counterexample at \texorpdfstring{$205$}{205}}
\label{sec:205}

Throughout this section, $d=46$, and $e_r$ denotes the standard
basis vector indexed by the prime $r\le205$. Put
\[
 t(m)=(v_r(m))_{r\le205},\qquad T=\{t(1),\ldots,t(205)\}.
\]
The main task is to reduce an arbitrary tiling by $T$ to a finite
computation. We first state the finite arithmetic inputs in
\cref{lem:205-computations}. These give three periods of one factor
of the tiling equation, from which we construct a periodic tiling.
We then reduce its finite quotient to two families,
$\Z_5\times\Z_{41^b}$ and $\Z_{5^a}\times\Z_{41}$.
The first is excluded for every $b$; the second reduces to $a\le3$
and is excluded by the finite computation.

\begin{theorem}\label{thm:205}
The arithmetic graph satisfies $\chi(B_{205})=206$.
\end{theorem}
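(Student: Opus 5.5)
The upper bound $\chi(B_{205})\le206$ comes from the weights in \cref{app:positive}. They give a group (or cyclic) logarithm of some length $n\ge205$. I would aim for length $206$: if the weights define a logarithm $h:[206]\to\Z_{206}$, then \cref{lem:group} colors $B_{206}$, and hence its subgraph $B_{205}$, with $206$ colors. The only check is that the listed values on $[206]$ are pairwise distinct residues, which is a finite verification. The substance is the lower bound. By \cref{prop:tiling} it is equivalent to showing that $T=T_{205}$ admits no tiling complement $C$ with $T\oplus C=\Z^{46}$.

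For the lower bound I would follow the prime-case strategy of \cref{thm:prime}, adapted to the factorization $205=5\cdot41$. Fix a color indicator $f$, so that $Sf=1$ with $S=D_1+\cdots+D_{205}$ as before. For each prime $p\in\{5,41\}$, the Frobenius-type congruence $(\sum D_m)^{p}\equiv\sum D_m^{p}\pmod p$ still holds. It no longer forces periodicity directly, since $205$ is not prime, but it yields dilation identities modulo $p$ for the tiling equation $\one_T*\one_C=1$. Following the outline at the start of the section, I expect the finite computations of \cref{lem:205-computations} to supply three periods, i.e. vectors $u$ with $\one_T*\one_C=\one_T*\one_{C+u}$ or an analogous statement for one factor of the tiling equation. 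Combining these periods, in the spirit of Szegedy's argument and of the Coven--Meyerowitz style reduction for tiles whose size has two prime factors, I would construct a periodic complement $C'$ with a period lattice $L$ of finite index. Then $T$ tiles the finite abelian group $G=\Z^{46}/L$.

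Next I would analyse the finite quotient. The plan is to take a minimal such quotient and use the Fourier argument of \cref{thm:prime}: the annihilating characters of $\one_T$ must carry the cyclotomic divisibility $\Phi_5$, $\Phi_{41}$, and so on. Choosing characters of order a power of $5$ or $41$, the cyclotomic condition $A(\zeta)=0$ with $A(1)=205$ forces the image of $T$ in suitable cyclic quotients to be equidistributed. This should reduce $G$, after passing to the subgroup generated by the image of $T$ and factoring out irrelevant parts, to one of the two families $\Z_5\times\Z_{41^b}$ or $\Z_{5^a}\times\Z_{41}$.

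Each family must then be excluded. For $\Z_5\times\Z_{41^b}$, the weight of each prime $r\le205$ is an element $(\alpha_r,\beta_r)$. Reducing modulo $41$ and using the equidistribution of $T$ forces a logarithm-like structure on the $\Z_{41}$ layer. Combined with the $\Z_5$ component, this yields a linear constraint that is violated uniformly in $b$, using that only small powers of $41$ occur as exponents in $[205]$. For $\Z_{5^a}\times\Z_{41}$, I would bound $a$: since $v_5(m)\le3$ for $m\le205$, higher $5$-adic layers beyond $a=3$ should contribute nothing new, so one reduces to $a\le3$. The remaining finitely many groups are then ruled out by the exhaustive search recorded in \cref{lem:205-computations}, consistent with the nonexistence of cyclic logarithms at $205$ in~\cite{BM}. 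I expect the main obstacle to be the periodization step: passing from an arbitrary, possibly aperiodic, complement to a periodic one when $|T|$ is composite. I would first try to extract periods separately for the $5$- and $41$-parts via the dilation congruences, and then glue them as in the two-prime-factor periodicity theorems.
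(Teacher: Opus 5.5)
Your upper bound matches the paper: the length-$206$ weights in \cref{app:positive} together with \cref{lem:group}. The lower bound, however, has a genuine gap exactly at the step you call the main obstacle, and the mechanism you sketch there would not work.

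The Frobenius congruence for the dilations $D_m$ gives only $\sum_{m=1}^{205}f(am^p)\equiv0\pmod p$ for $p\in\{5,41\}$. That is a sum of $205$ bits rather than $p$ bits, so it forces nothing. The ``periods'' you propose, $\one_T*\one_C=\one_T*\one_{C+u}$, hold for every $u$, since both sides are identically $1$. Szegedy's theorem needs prime cardinality, and Coven--Meyerowitz is one-dimensional, so there is no two-prime periodicity theorem in $\Z^{46}$ to glue with.

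The missing ideas are as follows.
\begin{enumerate}
\item Pass to an invariant measure on the orbit closure of $\one_{-C}$. The coprime dilation theorem \cite{HK,GGRT} gives $S_kf=1$ for $(k,205)=1$. Hence almost every character in the spectral measure of $f-1/205$ makes $z(T)$ either five complete $41$-gons or forty-one complete $5$-gons.
\item This yields $S_5S_{41}f=205$. Combined with the Frobenius divisibilities $5\mid S_5f$ and $41\mid S_{41}f$ for the dilated translation sums $S_kF=\sum_{a\in T}U_{ka}F$, it gives the rectangle factorization $f=gh$.
\item \cref{lem:205-phases} uses computations (i) and (ii) to rule out height-logarithm structures. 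It shows $z_r^{492}=1$ for $r=2,3,5$ on the spectrum of $h$. So $492e_2,492e_3,492e_5$ are periods of the factor $h$, not of $C$.
\item \cref{lem:205-orbit} uses the fact that $46>41$ labels lie in $\Z e_2+\Z e_3+\Z e_5$ to extend these to all of $41\Z^{46}$. It then builds a periodic complement via the component graph and Frobenius applied to the resulting $5$-point tiling equations.
\end{enumerate}

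The finite-quotient stage also rests on incorrect reasons. Equidistribution under prime-power characters does not by itself yield the two families. The paper needs three further ingredients: the pure-zero property \eqref{eq:205-pure-zero}; a projection argument with $p$-power versus $q$-power denominators, which produces a mixed zero $(u,v)$ alongside the pure zeros $(u,0),(0,v)$ and hence the box complement \eqref{eq:205-box}; and a cell/separation argument using $0$ and the prime basis vectors to force $a=1$ or $b=1$.

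For $\Z_{5^a}\times\Z_{41}$, the bound $v_5(m)\le3$ on labels says nothing about $a$. The $u$-values are $\sum_r v_r(m)u(e_r)$, with arbitrary weights in $\Z_{5^a}$ at every prime. The paper bounds $a$ structurally. It shows that the within-fiber difference group $D$ has invariant factors of $5$-adic valuation at most $2$. This uses the saturation check (iii), the height catalog (iv) with width at most $58$, and a spanning-forest argument on the $41$-vertex relation multigraph, which gives torsion orders $41\ell$ with $|\ell|\le58<125$. Only then can $u$ be rescaled into $\Z_{125}$ to contradict (i).

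For $\Z_5\times\Z_{41^b}$, the size of $v_{41}$ on $[205]$ is likewise irrelevant. Each fiber is a complete $41$-gon, so $z^{41}$ is a function $F$ of the $\Z_5$-coordinate. Since $z_r^{41}$ for $r=2,3,5$ has order at most $4$ by \cref{lem:205-phases} and also has $41$-power order, it equals $1$. A short case analysis with an exceptional-prime correction then produces a logarithm of length $205$, again excluded by (i).
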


\subsection{Finite arithmetic inputs}\label{sec:finite-inputs}

The coordinates at $2,3,5,7,11,13$ identify $\Z^6$ with a subgroup of $\Z^d$.
There are $98$ integers in
\[
 I_0=\{m\in[205]:\text{every prime divisor of }m\text{ is at most }13\};
 \qquad T_0=\{t(m):m\in I_0\}\subset\Z^6.
\]
We call these the \emph{core labels} and the remaining $107$ labels the
\emph{tail}. Since $17^2>205$, each tail label has a unique expression
$rc$, with $r\ge17$ prime and $1\le c\le\lfloor205/r\rfloor\le12$.
Thus the tail consists of $40$ prime blocks whose cofactors all lie in $I_0$.

For prime weights $w_r$ in an additive abelian group $A$, write
\[
 W(m)=\sum_{\substack{r\le205\\ r\text{ prime}}}v_r(m)w_r,
 \qquad \mu_W(a)=\#\{m\in[205]:W(m)=a\}.
\]
We retain multiplicities in the image $W([205])$. When
$A=\Z\times\Z_{41}$, we call the first coordinate the \emph{height};
it is an integer, not a residue class.
For a core homomorphism $v:\Z^6\to\Z_{41}$, put
\[
 D_0(v)=\big\langle t(m)-t(n):m,n\in I_0,\quad
                 v(t(m))=v(t(n))\big\rangle\le\Z^6.
\]

We abbreviate $D_0(v)$ to $D_0$ when $v$ is fixed.
We use the following notation for Smith normal form.
For a subgroup $D\le\Z^k$ of rank $s$, Smith normal form gives a
basis $b_1,\ldots,b_k$ of $\Z^k$ and positive integers
$d_1\mid d_2\mid\cdots\mid d_s$ such that
\[
 D=\langle d_1b_1,\ldots,d_sb_s\rangle,
 \qquad \Z^k/D\cong\Z_{d_1}\oplus\cdots\oplus\Z_{d_s}\oplus\Z^{k-s}.
\]
The $d_i$ are the nonzero \emph{invariant factors} of $D$.

\begin{lemma}[Finite arithmetic checks; computer-assisted]\label{lem:205-computations}
The following statements hold.
\begin{enumerate}
\renewcommand{\theenumi}{\roman{enumi}}
\renewcommand{\labelenumi}{\textup{(\theenumi)}}
\item There is no homomorphism $(u,v):\Z^d\to\Z_{125}\times\Z_{41}$
such that, for every $b\in\Z_{41}$, the labels with $v(t(m))=b$ have
five distinct $u$-values forming one coset of $25\Z_{125}$.
Equivalently, each fiber contains exactly five labels, its $u$-values
agree modulo $25$, and no two agree modulo $125$.

\item For each $s\in\{0,1,2,3\}$, there is no choice of weights in
$\Z\times\Z_{41}$ for which $W([205])$ is supported on
$\{0,1,2,3\}\times\Z_{41}$, with
\[
 \mu_W(h,b)=1+\one_{\{h=s\}},
 \qquad (h,b)\in\{0,1,2,3\}\times\Z_{41},
\]
and weights
\[
 w_2=(0,1),\quad w_3=(0,b_3),\quad w_5=(1,0),\quad
 w_r\in\{0,1\}\times\Z_{41}\quad(r=7,11,13).
\]
For each $s\in\{4,5\}$, there is likewise no choice of weights in
$\Z_6\times\Z_{41}$ with
\[
 \mu_W(h,b)=\one_{\{h\ne s\}},\qquad
 w_2=(0,1),\quad w_3=(0,b_3),\quad w_5=(1,b_5).
\]
Here $b_3,b_5\in\Z_{41}$ are free, as are the other prime weights
unless explicitly restricted.

\item If $v:\Z^6\to\Z_{41}$ satisfies $v(e_2)=1$ and every
fiber of $v|_{T_0}$ has at most five labels, then $D_0(v)$ is
$5$-saturated in $\Z^6$:
\[
 5x\in D_0(v)\ \Longrightarrow\ x\in D_0(v)
 \qquad(x\in\Z^6).
\]
Equivalently, every nonzero invariant factor of $D_0(v)\le\Z^6$
is relatively prime to $5$.

\item Consider homomorphisms $L:\Z^6\to\Z_{205}$ whose
restrictions to $T_0$ are injective. Reducing $L$ modulo $41$ and
multiplying by a nonzero scalar to obtain $v(e_2)=1$ gives precisely
$51\,582$ distinct core maps $v$. For $51\,451$ of these,
$\operatorname{rank}D_0(v)=6$; the other $131$ have rank five.
For each rank-five map there is a primitive integer linear functional
$H:\Z^6\to\Z$ annihilating $D_0(v)$, with $H_r=H(e_r)$, such that
\[
 H_2\not\equiv0\pmod{41},\qquad H_r\equiv H_2v(e_r)\pmod{41},
\]
and
\[
 \max_{t\in T_0}H(t)-\min_{t\in T_0}H(t)\le58,
 \qquad
 \max_{1\le c\le12}H(t(c))-\min_{1\le c\le12}H(t(c))\le29.
\]
Here primitive means $\gcd(H_2,H_3,H_5,H_7,H_{11},H_{13})=1$.
\end{enumerate}
\end{lemma}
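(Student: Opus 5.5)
This lemma is a finite verification, so the plan is to organise each part as an exhaustive search whose size is controlled by structure. All four parts share one preliminary reduction. A homomorphism out of $\Z^d$ is determined by the images of the $46$ basis vectors $e_r$. The tail primes $r\ge17$ occur in the labels $rc$ with $c\le12$. So the images $w_r$ for $r\ge17$ enter only through the translated blocks $w_r+W(\{1,\ldots,\lfloor205/r\rfloor\})$, and these blocks involve only the core weights. We therefore always enumerate the six core weights $w_2,\ldots,w_{13}$ first, up to the normalisation $w_2=(0,1)$ (or $v(e_2)=1$), which is justified by scaling in $\Z_{41}^\times$ and, where relevant, by an automorphism of the other factor. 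Each tail prime is then placed independently. We would use an exact-cover or backtracking search over the $40$ prime blocks, ordered by decreasing block size, beginning with $r=17$, whose block has size $12$. Multiplicities are recorded exactly throughout.

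For \textup{(i)}, we enumerate $v$ on the core with $v(e_2)=1$ and discard any $v$ having a fiber of more than five core labels. For each survivor we enumerate $u$ on the core modulo $125$ and check that each partial fiber has $u$-values that agree modulo $25$ and are distinct modulo $125$. We then attempt to fill in the tail primes and require that every final fiber has exactly five labels. For \textup{(ii)}, the height coordinate is already heavily constrained: $w_5$ has height $1$, and the other core weights have height in $\{0,1\}$ (or lie in $\Z_6$ in the second case). Hence only $2^3\cdot41^{4}$ or so core choices remain. For each, we compute the multiset $\mu_W$ on the core labels and reject it as soon as some multiplicity exceeds the target $1+\one_{\{h=s\}}$ (respectively $\one_{\{h\ne s\}}$). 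We then fit the tail blocks, each as a translate of a fixed small multiset into the residual capacity, and check infeasibility by backtracking.

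For \textup{(iii)} and \textup{(iv)}, the objects are the core maps themselves. We first enumerate all $L:\Z^6\to\Z_{205}$ that are injective on $T_0$, or more efficiently, all $v$ modulo $41$ with fibers of size at most five, since injectivity of $L$ forces this. For each $v$ we generate $D_0(v)$ from the differences within fibers and compute its Smith normal form over $\Z$. This yields the rank, giving the $51\,451$/$131$ split, and the invariant factors, whose coprimality to $5$ gives \textup{(iii)}. The counts in \textup{(iv)} are then obtained by deduplicating the reduced, normalised maps. For each rank-five $D_0(v)$ the annihilator is a rank-one primitive sublattice of $(\Z^6)^*$, so $H$ is unique up to sign. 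We verify the congruences $H_r\equiv H_2v(e_r)$ modulo $41$ and the two range bounds directly over $T_0$ and over $t(1),\ldots,t(12)$.

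The main obstacle is the search size in \textup{(i)} and \textup{(ii)}. Core choices alone run to $41^5\cdot125^6$ in \textup{(i)}, so we would interleave the constraints: fix $v$ first and prune by fiber size, then choose $u$ prime by prime with immediate conflict checks. We would also exploit the automorphisms $u\mapsto\lambda u$ for $\lambda\in\Z_{125}^\times$ to normalise one $u$-weight. To make the certificate credible, we would run two independent implementations, an exact SAT/ILP encoding and a direct backtracker, and confirm that they agree.
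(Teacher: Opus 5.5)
Your treatment of (i) and (ii) is essentially the paper's. You enumerate the core weights first with capacity and residue pruning. You then place the tail primes as translates of the cofactor multisets by backtracking.

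One justification is missing in (i): scaling by $\Z_{41}^\times$ can normalise $v(e_2)=1$ only once $v(e_2)\ne0$ is known. The case $v(e_2)=0$ is easily excluded, since the eight labels $2^j$, $0\le j\le7$, would then lie in one fiber of capacity five. (The paper instead notes that $25\mid u(e_2)$, giving the collision $u(t(32))=u(t(1))$.) Your search as described simply skips this case without saying why.

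The genuine gap is in (iii) and (iv). You offer ``enumerate all $L$ injective on $T_0$'' and ``enumerate all $v$ with fibers of size at most five'' as interchangeable, and you read both parts off a single pass. But the two parts concern different sets of core maps.

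\begin{itemize}
\item Part (iii) quantifies over \emph{every} capacity-five $v$ with $v(e_2)=1$. It is applied in \cref{lem:205-5-family} precisely to show that a core map arising there lifts to an injective cyclic one. It therefore cannot be checked only on reductions of injective $L$.
\item Part (iv) concerns only those reductions, and the split you would get from all capacity-five maps is not the stated one. Even after discarding rank-six branches, the paper's capacity-five search finds $1\,943\,876$ rank-five and $1\,283$ rank-four maps. The catalog, by contrast, has $51\,582$ maps and no rank-four case.
\end{itemize}

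So whichever enumeration you run, one of the two parts goes unverified. You need two separate computations:
\begin{itemize}
\item For (iii), a capacity-five enumeration. The paper compares the $\F_5$-rank and the rational rank of the difference matrix $M(v)$, pruning once the partial rows reach rank six mod $5$.
\item For (iv), an enumeration of injective $L$, followed by reduction, normalisation and deduplication. Here injectivity on the labels $2^j$, $0\le j\le7$, forces $L(e_2)$ to have order $41$ or $205$, normalised to $5$ or $1$. Equivalently, you can filter the capacity-five maps by whether some $\Z_5$-component separates every fiber.
\end{itemize}

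Your observation that the primitive annihilator of a rank-five $D_0(v)$ is unique up to sign is correct. It matches the paper's cofactor construction together with the check $M(v)H^{\mathsf T}=0$. Since the congruences and width bounds are invariant under sign, checking that one vector suffices.
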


\begin{proof}[Computational verification of \cref{lem:205-computations}]
We describe the finite enumerations and give their outcomes. All tests
use exact integer or modular arithmetic.

For (i) and (ii), the variables are the prime weights. In (i), each
unrestricted weight ranges over $\Z_{125}\times\Z_{41}$.
In the four-height cases of (ii), each otherwise unrestricted weight
ranges over $\{0,1,2,3\}\times\Z_{41}$: its first coordinate must
be in this range because the prime itself is one of the labels.
In the six-height cases, the range is $\Z_6\times\Z_{41}$.
The prescribed weights and the restrictions at $7,11,13$ are imposed
as stated. For each normalization, the enumeration proceeds as follows.
\begin{enumerate}
\item Assign the weights at $2,3,5,7,11,13$ in that order. After
each assignment, evaluate every core label whose prime factors have
all been assigned. Reject the branch if a cell capacity is exceeded
or, in (i), if a fiber has incompatible residues modulo $25$ or a
repeated value modulo $125$.
In the four-height cases, also reject a branch as soon as an evaluated
label has height outside $\{0,1,2,3\}$.
\item For each surviving complete core, a prime $17\le r\le102$
contributes a translate of the already determined cofactor multiset
\[
 w_r+\bigl(W(1),\ldots,W(\lfloor205/r\rfloor)\bigr).
\]
Try every permitted shift, applying the same rejection conditions
to the combined labels. One may choose the block with the fewest
currently permitted shifts first; this changes only the search order.
\item After these blocks, the $20$ primes above $102$ are singleton
labels. Their unrestricted weights fill the remaining cell capacities;
in (i), they fill the unused top digits in each fiber, choosing any
common low residue if that fiber is still empty.
\end{enumerate}
Every full arithmetic assignment determines a branch in this search.
Each rejection is a violation that cannot be repaired by assigning
more prime weights. Conversely, a surviving completion has precisely
the required multiplicities, since the total capacity is $205$.
This proves that exhaustive termination decides the stated existence
questions. The enumerations in (iii) and (iv) stop at the core and
apply the matrix tests described below.

For (i), $v(e_2)=0$ would force $u(e_2)\equiv0\pmod{25}$ by the
labels $1,2$, and hence $u(t(32))=5u(e_2)=0=u(t(1))$, a collision.
Independent unit multiplications therefore normalize $v(e_2)=1$ and
$u(e_2)\in\{0,1,5,25\}$. Write a residue modulo $125$ as
$\ell+25j$, with $0\le\ell<25$ and $j\in\Z_5$.
The fiber test keeps a common low residue $\ell$ and distinct high
digits $j$. The numbers
of surviving complete core assignments are
\[
\begin{array}{c|rrrr}
 u(e_2)&0&1&5&25\\ \hline
 \text{complete cores}&60\,964&592&1\,012&47\,018\\
 \text{full completions}&0&0&0&0.
\end{array}
\]
All four tail searches terminate by exhaustion. An independently
implemented search reproduces both these core counts and the absence
of a completion.

In (ii), the doubled-height cases $s=0,1,2,3$ have respectively
$19,39,0,0$ complete cores and no full completion. The omitted-height
cases $s=4,5$ have $0,53$ complete cores. Each of the $53$ cores fails
every one of the $246$ possible shifts of the prime-$17$ block, already
excluding an image on its $98+12=110$ labels. The residue $b_5$ is
enumerated freely in this six-height computation.

For (iii), choose the least integer $m_b$ in each nonempty core
fiber $b$. Form the integer matrix $M(v)$ with rows
$t(m)-t(m_b)$ for $m\in I_0$ with $v(t(m))=b$.
These rows generate $D_0(v)$, since every same-fiber difference is
a difference of two such rows. The rank of $M(v)$ over $\F_5$
equals its rational rank if and only if
its nonzero invariant factors are all prime to $5$: in Smith coordinates
the two ranks count, respectively, the invariant factors not divisible
by $5$ and all nonzero invariant factors. This is also equivalent to
the stated saturation implication. Enumerate
$(v(e_3),v(e_5),v(e_7),v(e_{11}),v(e_{13}))\in\Z_{41}^5$,
rejecting a partial assignment once a core fiber contains more than
five labels. If its difference rows already have rank six modulo $5$,
every completion has rank six over both $\F_5$ and $\Q$, so that
branch satisfies (iii) without further enumeration.
At all other admissible leaves, compare the two ranks exactly.
This can be done by row reduction over $\Q$ and $\F_5$, or by
integer minor calculations. The ranks agree at every remaining leaf:
$1\,943\,876$ have rank five and $1\,283$ have rank four.
These are counts after the rank-six pruning step, not counts of
all completed core maps.

For (iv), injectivity on $1,2,4,\ldots,128$ forces the order of
$L(e_2)$ in $\Z_{205}$ to be $41$ or $205$. Unit multiplication
normalizes it to $5$ or $1$. Enumerating the remaining core weights
gives $60\,644$ and $46\,678$ injective core assignments, respectively.
Their normalized mod-$41$ projections, with repetitions removed, give
the $51\,582$ maps stated above. For each projection form $M(v)$ as
in (iii). Row reduction modulo $5$ gives rank six in $51\,451$ cases
and rank five in $131$ cases.

In a rank-five case, take five rows independent modulo $5$, forming
a $5\times6$ integer matrix $B$. Let $B^{(j)}$ be the matrix obtained
by deleting column $j$, and put
\[
 c_j=(-1)^{j+1}\det B^{(j)},\qquad
 H=\frac{(c_1,\ldots,c_6)}{\gcd(|c_1|,\ldots,|c_6|)}.
\]
The cofactor vector is nonzero, and its normalization is primitive.
In each of the $131$ cases, direct multiplication gives $M(v)H^{\mathsf T}=0$,
which also proves that the rational rank is five.
The coefficients satisfy the congruences in (iv). Evaluating the
resulting $H$ on all $98$ core labels and on $t(1),\ldots,t(12)$
gives the two stated width bounds. Thus the heights are constructed
from the enumerated core maps by exact integer calculations.
\end{proof}

The ancillary files contain the exact-arithmetic programs and completed
run logs for \cref{lem:205-computations}, together with a checker for
the weights in \cref{tab:208,tab:positive}. The file \texttt{anc/README.txt}
gives the commands and expected outputs.

The catalog in (iv) concerns projections of injective cyclic core maps.
It does not classify all capacity-five maps in (iii), which include
rank-four cases. The proof below uses saturation first to show that a
core arising in the finite-quotient argument belongs to this catalog.

An immediate consequence of (i), used below, is that no homomorphism
$\Z^d\to\Z_{205}$ is bijective on $T$: under
$\Z_{205}\cong\Z_5\times\Z_{41}$, multiplying the first coordinate
by $25$ would give an image excluded by (i).

\subsection{The three small periods}

Write $\T=\{z\in\C:|z|=1\}$ and
$\mu_s=\{z\in\T:z^s=1\}$. For $y\in\T$, a \emph{complete
$s$-gon} is the multiset $y\mu_s$, with each value counted once;
unions of such gons retain multiplicities. A \emph{character} of an
additive abelian group is a homomorphism into $\T$.
For a character $z:\Z^d\to\T$, write
\[
 z_r=z(e_r),\qquad \alpha(x)=z(x)^{41},\qquad
 \alpha_r=\alpha(e_r),\qquad E=\alpha(T).
\]
Here $E$ is a set, whereas $z(T)$ is counted with multiplicity.
We allow $\ord(u)=\infty$ for $u\in\T$ of infinite order.
The $41$st powers of the elements of a complete $41$-gon
$\rho\mu_{41}$ all equal $\rho^{41}$. Thus, if $z(T)$ is a union of
five such gons, then $|E|\le5$. If equality holds, each value in $E$
corresponds to exactly one gon, whose $41$ elements are distinct.

\begin{lemma}[Character values at small primes]\label{lem:205-phases}
Suppose the multiset $z(T)$ is a union of five complete $41$-gons.
Then
\[
 \operatorname{ord}(\alpha_2),\operatorname{ord}(\alpha_3),
 \operatorname{ord}(\alpha_5)\le4,
 \qquad z_2^{12}\in\mu_{41}\setminus\{1\}.
\]
\end{lemma}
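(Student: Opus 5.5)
The plan is to exploit two structural facts. First, $\alpha=z^{41}$ is again a character, so $E=\alpha(T)$ contains every product $\alpha_2^i\alpha_3^j\alpha_5^k$ with $2^i3^j5^k\le205$. Second, the gon decomposition controls the $\alpha$-fibres. For $e\in E$ put $F_e=\{m\in[205]:\alpha(t(m))=e\}$. A gon $\rho\mu_{41}$ lies entirely over the single value $\rho^{41}$. Hence $|F_e|=41\,k_e$, where $k_e\ge1$ is the number of gons over $e$ and $\sum_e k_e=5$. In particular $|E|\le5$, and every fibre has size $41$, $82$, $123$, $164$ or $205$. Moreover, within each fibre the multiset $z(t(F_e))$ is itself a union of $k_e$ complete $41$-gons.

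\emph{Step 1: the orders of $\alpha_2,\alpha_3,\alpha_5$.} The labels $1,2,\ldots,128$ give $\alpha_2^k$ for $0\le k\le7$, all in a set of size at most $5$, so $\ord(\alpha_2)\le5$. Similarly, $1,3,\ldots,81$ give $\ord(\alpha_3)\le5$. To rule out order $5$, and to bound $\ord(\alpha_5)$, I would use the fibre counts rather than mere membership.

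Suppose $\ord(\alpha_2)=5$. Then $E=\langle\alpha_2\rangle$ has exactly five elements, so every $k_e=1$ and every fibre has exactly $41$ labels. I would then split into cases on $\alpha_3$ and $\alpha_5$, which must lie in $\langle\alpha_2\rangle$. In each case I would count the labels $2^i3^j5^k\le205$, together with the contributions of the tail primes, lying over each power of $\alpha_2$. The aim is a fibre whose size is forced away from $41$, or a fibre whose $z$-values, reduced to their $41$-part, cannot form a single gon. The same scheme handles $\ord(\alpha_3)=5$. For $\alpha_5$ the membership test alone is weak, since only $1,5,25,125$ are pure powers. Instead I would combine $\alpha_5^k$ with the products $2^i5^k$ and $3^j5^k$ lying in $[205]$. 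These force $\alpha_5\in\langle\alpha_2,\alpha_3\rangle$ and then force its order to be at most $4$ by the same fibre counting.

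\emph{Step 2: $z_2^{12}\in\mu_{41}$.} This is immediate from Step 1. Each of the orders $1,2,3,4$ divides $12$, so $\alpha_2^{12}=1$. Hence $(z_2^{12})^{41}=1$, which says $z_2^{12}\in\mu_{41}$.

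\emph{Step 3: $z_2^{12}\ne1$.} Suppose instead $z_2^{12}=1$. Then the $41$-primary part of $z_2$ is trivial. Write $z=z'z''$, where $z''$ takes values in $\mu_{41}$ on the relevant finite quotient and $z'$ has order prime to $41$. The hypothesis says exactly that, within each fibre $F_e$, the values $z''(t(m))$ are equidistributed over $\mu_{41}$, up to the multiplicity $k_e$. This gives a map $v=\log_\zeta z'':\Z^d\to\Z_{41}$ with $v(e_2)=0$, whose restriction to each fibre $F_e$ is $k_e$-to-one onto $\Z_{41}$. Since $\ord(\alpha_2)\le4$, I would then compare the labels $1$ and $2^{\ord(\alpha_2)}$, or more generally $2^i$ and $2^{i+\ord(\alpha_2)}$. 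These lie in the same $\alpha$-fibre and have the same $z$-value, and $z_2^{12}=1$ means $z(2^i)$ takes at most $12$ distinct values in total. I would aim to show that some fibre then carries two powers of $2$ with equal $z$-value in a way incompatible with distinctness. This is automatic when $k_e=1$, since a single gon has distinct elements. This mirrors the collision argument in the remark after \cref{lem:205-computations}: there, $v(e_2)=0$ forced $u(t(32))=u(t(1))$.

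The main obstacle is the case analysis in Step 1, and the situation in Step 3 where some $k_e\ge2$, so that repeated $z$-values inside a fibre are not immediately contradictory. For those cases I would first try to prove that $|E|=5$ (all $k_e=1$) by an independent count of the powers of $2$ and $3$. Failing that, I would handle the $|E|<5$ configurations separately by the explicit fibre-size counting above.
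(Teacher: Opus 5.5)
Your proposal has a genuine gap in Step~1: fibre counting alone cannot deliver the order bounds, because some configurations meeting the hypothesis are excluded only by a computer search. Suppose a cyclic logarithm $h$ of length $205$ with $5\nmid h(2)$ existed. Then the character $z(x)=e^{2\pi i\phi(x)/205}$, where $\phi(e_r)=h(r)$, would have $z(T)=\mu_{205}$. This is a union of five complete $41$-gons with exactly one gon over each value of $E$, and yet $\ord(\alpha_2)=5$. No fibre of this configuration has the wrong size or fails to be a single gon. So any proof must in particular exclude such a logarithm, which is \cref{lem:205-computations}(i). The paper does exactly this: order five at $2$ or $3$ forces $E=\mu_5$, and then $z$ itself is a logarithm into $\mu_{205}\cong\Z_{205}$.

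The idea you are missing, used throughout the paper's proof, is the untwisting step. Find an additive \emph{integer} height $K$ whose values on $T$ are five consecutive integers separating the points of $E$. Divide $z$ by $z_3^{K}$ or $z_5^{K}$, and by a $41$st root $\theta$ of the extra value on exceptional primes. The result is a character $\zeta_{41}^{b}$ with one complete gon on each height, so $x\mapsto(K(x)\bmod5,b(x))$ is a forbidden logarithm into $\Z_{205}$. Two residual configurations remain: $E=\{1,u,u^2,u^3\}$ with one height carrying two gons, and $\ord(\alpha_5)=6$ with one height omitted. These are the models excluded by the search in \cref{lem:205-computations}(ii). Without these computational inputs your case analysis has nothing to contradict.

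Two specific claims also fail. First, the labels $1,3,9,27,81$ do not give $\ord(\alpha_3)\le5$. There are only five of them, since $3^5>205$, so $1,u,\ldots,u^4$ may be distinct and exhaust $E$. The paper treats $\ord(\alpha_3)>5$, including infinite order, in three steps: it forces $\alpha_2=1$, bounds the exponents $k_r$ in $\alpha_r=u^{k_r}$ so that $K\in[0,4]$ on core and tail, and then untwists. Second, when $\alpha_2=\alpha_3=1$ the labels $2^i5^k$ and $3^j5^k$ carry no more information than $1,5,25,125$. So they cannot force $\alpha_5\in\langle\alpha_2,\alpha_3\rangle$. The paper proves only that $\langle\alpha_2,\alpha_3\rangle\ne\{1\}$ implies $|\langle\alpha_2,\alpha_3,\alpha_5\rangle|\le4$. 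The case $\alpha_2=\alpha_3=1$ is the hard one, treated through $I=\{1,u,u^2,u^3\}$ and the subcases $E=I$ and $E=I\cup\{v\}$.

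Finally, the obstacle you flag in Step~3 disappears with one observation. If $z_2^{12}=1$, the eight values $z_2^j$, $0\le j\le7$, lie in $\mu_{12}$, and $\mu_{12}\cap\mu_{41}=\{1\}$. So any two of them in one $\mu_{41}$-coset are equal, while a complete gon contains each value only once. Hence the labels $2^j$ lie in eight distinct gons, which is impossible with five, whatever the $k_e$. You need neither $|E|=5$ nor the splitting $z=z'z''$, which is not available in general when some $z_r$ has infinite order.
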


\begin{proof}
We will repeatedly use the following consequence of
\cref{lem:205-computations}(i).
Suppose integer prime weights $k_r$ define the additive height
$K(x)=\sum_r k_rx_r$, whose values on $T$ lie in five consecutive
integers, and suppose these five heights distinguish the five values in $E$.
If a character $\beta:\Z^d\to\mu_{41}$ is obtained from $z$ by a
constant rotation on each height, then every height contains
each value of $\mu_{41}$ exactly once. Fix
$\zeta_{41}=e^{2\pi i/41}$ and write
$\beta(x)=\zeta_{41}^{b(x)}$, with $b:\Z^d\to\Z_{41}$ additive.
The homomorphism
\begin{equation}\label{eq:205-height-logarithm}
 x\longmapsto \bigl(K(x)\bmod5,b(x)\bigr)
       \in\Z_5\times\Z_{41}\cong\Z_{205}
\end{equation}
is then bijective on $T$: different heights have different first
coordinates, and within each height the second coordinates run through
$\Z_{41}$ once. Its restriction to the exponent labels is therefore
a logarithm of length $205$, contrary to
\cref{lem:205-computations}(i). The requirement that $K$ be an
\emph{additive integer} height matters: exponents chosen modulo the
order of a character value need not lift to such a height.

\smallskip\noindent
\emph{The values at $2$ and $3$.}
The eight labels $2^j$, $0\le j\le7$, show that $\alpha_2$ has finite order
at most five: an element of larger or infinite order would already
give six distinct values among its first six powers.
Suppose $u=\alpha_3$ has order $N>5$, including $N=\infty$.
The five labels $1,3,9,27,81$ then exhaust the set $E$:
\[
 E=\{1,u,u^2,u^3,u^4\}.
\]
Put $U=\langle\alpha_2\rangle$. Its order is at most five, and
the labels $2^j,3\cdot2^j,9\cdot2^j$, for $0\le j<|U|$, all
belong to $[205]$. Hence $U,uU,u^2U\subset E$.
This forces $U=\{1\}$. Indeed, $U\subset\langle u\rangle$ is
finite, so the assertion is immediate when $N=\infty$. For finite
$N\ge9$, a nontrivial subgroup of order $s$ contains
$u^{N-N/s}$, whose exponent is at least five and less than $N$.
For $N=6$, the possible nontrivial subgroups have order two or three:
$u^2U$ contains $u^5$ in the former case, and $uU$ contains $u^5$
in the latter. For $N=7$ there is no nontrivial subgroup of order at
most five. For $N=8$, the subgroup of order four already contains
$u^6$, while the subgroup of order two has $u^5\in uU$.
All possibilities contradict the displayed description of $E$.
Thus $\alpha_2=1$.

For each prime $r\le205$, choose the unique $k_r\in\{0,1,2,3,4\}$
such that $\alpha_r=u^{k_r}$. In particular $k_2=0$ and $k_3=1$.
For $r=5,7,11,13$, the labels $3r$ and $9r$ first force $k_r\le2$:
the choices three and four would produce the forbidden value $u^5$.
The labels $75=3\cdot5^2$ and $147=3\cdot7^2$ further exclude
$k_5=k_7=2$. Consequently
\[
 k_2=0,\quad k_3=1,\quad k_5,k_7\le1,\quad
 k_{11},k_{13}\le2.
\]
Define $K(x)=\sum_r k_rx_r$ using these integer weights.
For a core integer $m$, each core prime satisfies $r\ge3^{k_r}$,
so $m\ge3^{K(t(m))}$. Since $205<3^5$, this gives
$0\le K(t(m))\le4$ on the core.
For a tail $m=rc$, with $r\ge17$ prime and $c\le12$, the preceding
weights give $K(t(c))\le2$. If this cofactor height is one, then
$c\ge3$ and the available label $3r$ forces $k_r\le3$.
If it is two, then $c\ge9$ and the labels $3r,9r$ force $k_r\le2$.
These deductions use only exponents up to five, so no reduction modulo
$N$ occurs, even when $N=6$. Thus $K(t(m))\le4$ on every tail too.

All five heights occur at powers of three. Set
$\beta_r=z_r/z_3^{k_r}$ and extend multiplicatively to a character.
Since $\beta_r^{41}=\alpha_r/u^{k_r}=1$, this character takes values
in $\mu_{41}$. At height $j$ it divides every tile value by $z_3^j$,
so \eqref{eq:205-height-logarithm} gives the forbidden logarithm.
We conclude that $\operatorname{ord}(\alpha_3)\le5$.

If either $\alpha_2$ or $\alpha_3$ has order five, its observed powers
give $E=\mu_5$. Then $z(T)\subset\mu_{205}$, and its five gons are
the five distinct $\mu_{41}$-cosets of $\mu_{205}$, each used once.
The character $z$ itself therefore gives a logarithm to
$\mu_{205}\cong\Z_{205}$. This excludes order five at both primes.

The resulting bound $\operatorname{ord}(\alpha_2)\le4$ gives
$z_2^{12}\in\mu_{41}$. Suppose it were one. All eight values
$z(t(2^j))=z_2^j$ would then belong to $\mu_{12}$. Two such values in
the same $\mu_{41}$-coset must be equal, because
$\mu_{12}\cap\mu_{41}=\{1\}$. But one complete gon contains each
of its values only once. Hence each of the eight labels would need a
different gon, even if some values repeat. Five gons cannot suffice,
and $z_2^{12}\ne1$ follows.

\smallskip\noindent
\emph{The subgroup $\langle\alpha_2,\alpha_3\rangle$.}
Let $H=\langle\alpha_2,\alpha_3\rangle$. A finite subgroup of
$\T$ is cyclic, so its order is the least common multiple of the
two generator orders. Since both orders are at most four, the only
possibilities exceeding four are six and twelve. In those cases the
generator orders are respectively $2,3$ or $3,4$, in either order.
All their distinct products occur among $2^a3^b\le108$, contradicting
$|E|\le5$. Thus $|H|\le4$.

If $|H|=s\in\{3,4\}$, one of $\alpha_2,\alpha_3$ generates $H$.
Writing that prime as $r$, the labels $r^j,5r^j$ for $0\le j<s$
are at most $5\cdot3^3=135$ and show that
$H\cup\alpha_5H\subset E$. Distinct cosets would contribute
$2s>5$ values, so $\alpha_5\in H$.
If $|H|=2$, choose $r\in\{2,3\}$ with $\alpha_r\ne1$.
The six labels $r^i5^j$, $0\le i<2$, $0\le j<3$, are at most
$75$ and contain the three cosets $H,\alpha_5H,\alpha_5^2H$.
They cannot be distinct. The order of $\alpha_5H$ in
$\langle H,\alpha_5\rangle/H$ is therefore at most two, and the
whole group has order at most four. We have proved
\begin{equation}\label{eq:205-small-phase-subgroup}
 H\ne\{1\}\quad\Longrightarrow\quad
 |\langle\alpha_2,\alpha_3,\alpha_5\rangle|\le4.
\end{equation}
In particular, any remaining case with
$\operatorname{ord}(\alpha_5)>4$ has $\alpha_2=\alpha_3=1$.

\smallskip\noindent
\emph{The four initial values at $5$.}
Put $u=\alpha_5$ and assume its order is at least five or infinite.
The labels $1,5,25,125$ give the four distinct values
$I=\{1,u,u^2,u^3\}\subset E$. There are only two possibilities:
$E=I$, or $E=I\cup\{v\}$ for a single $v\notin I$.

First consider $E=I$. Write $\alpha_r=u^{k_r}$ with
$0\le k_r\le3$. For $r=7,11,13$, the labels $5r$ and $r^2$
exclude respectively $k_r=3$ and $k_r=2$, since $u^4\notin E$.
Thus the core weights are
\[
 k_2=k_3=0,\qquad k_5=1,\qquad
 k_7,k_{11},k_{13}\in\{0,1\}.
\]
For a core label, height at least four would require an integer at
least $5^4>205$. For a tail $rc$, the cofactor height is at most one,
so the unreduced height $k_r+K(t(c))$ is at most four. Its value
cannot be four because $u^4\notin E$. Thus the additive integer height
$K$ takes values only in $\{0,1,2,3\}$ on $T$. Each height corresponds
to at least one complete gon; as there are five gons in total, one height contains two
and the other three contain one each.

Set $\beta_r=z_r/z_5^{k_r}$ and write
$\beta_r=\zeta_{41}^{b_r}$. Here $b_5=0$. Also $\beta_2=z_2$ is a
nontrivial $41$st root: $\alpha_2=1$ and $z_2^{12}\ne1$.
Multiplying every $b_r$ by $b_2^{-1}$ normalizes $b_2=1$ and preserves
all cell multiplicities. The resulting arithmetic image in
$\{0,1,2,3\}\times\Z_{41}$ is exactly the doubled-height model
excluded by \cref{lem:205-computations}(ii). Thus $E\ne I$.

\smallskip\noindent
\emph{An additional value.}
Suppose now $E=I\cup\{v\}$. If
$v\notin\{u^4,u^{-1}\}$, the equations from $r,5r,r^2$ for
$r=7,11,13$ imply $\alpha_r\in\{1,u\}$.
Indeed, $u^2$ would require the missing square $u^4$, and $u^3$
would require the missing product $u^4$. A prime with value $v$ and
$5r\le205$ would require $uv\in E$; since $v\notin I$, this is
possible only when $uv=1$, or $v=u^{-1}$, which is excluded.
Consequently every prime with value $v$ exceeds $41$.

Give such exceptional primes height four. Give every other prime
the height $k_r\in\{0,1,2,3\}$ determined by $\alpha_r=u^{k_r}$.
A label containing an exceptional prime contains it once, has cofactor
at most four, and hence has height four and value $v$.
For all other labels, the same core and tail estimates as above bound
the unreduced height by four; height four is excluded by $u^4\notin E$.
Thus these labels have heights $0,1,2,3$. The five heights correspond
bijectively to the five values in $E$.

Choose $\theta\in\T$ with $\theta^{41}=v$ and put
\[
 \beta_r=
 \begin{cases}
 z_r/\theta,&\alpha_r=v,\\
 z_r/z_5^{k_r},&\alpha_r\ne v.
 \end{cases}
\]
These prime values lie in $\mu_{41}$ and define one character
$\beta$. On heights $0,1,2,3$ its denominator is $z_5^K$; on height
four its denominator is $\theta$, because the exceptional prime occurs
once and its cofactor has height zero. Thus each height still contains
a complete $41$-gon after division. The logarithm
\eqref{eq:205-height-logarithm} is again a contradiction.
This argument applies to every order at least five, including infinite
order, whenever the additional value is neither $u^4$ nor $u^{-1}$.

It remains to consider $v=u^4$ and $v=u^{-1}$. First suppose
$\operatorname{ord}(u)\ge7$, or $u$ has infinite order.
If $v=u^4$, use prime heights in $[0,4]$.
The labels $5r,r^2$ show that the core primes $r=7,11,13$ have
heights at most two: height three would require both $u^4$ and $u^6$,
and height four would require $u^5$. A core label of height at least
five would therefore have size at least
\[
 \min\{5^5,5^3\cdot7,5\cdot7^2,7^3\}=245>205.
\]
A tail has cofactor height at most two and hence unreduced height at
most six. These seven powers are distinct, so membership in $E$ forces
the height to be at most four.
If $v=u^{-1}$, use prime heights in $[-1,3]$. A core prime cannot
have height $-1$, since its square would have the absent value $u^{-2}$;
heights two and three would require the absent value $u^4$ at $r^2$
or $5r$. Thus $k_7,k_{11},k_{13}\in\{0,1\}$.
Core heights are in $[0,3]$, and a tail has unreduced height in
$[-1,4]$. These powers are distinct; since $u^4\notin E$, its height
lies in $[-1,3]$. In both cases the five consecutive heights
are attained and correspond to the five values in $E$. Dividing by $z_5^K$
produces a character in $\mu_{41}$ and the forbidden logarithm
\eqref{eq:205-height-logarithm}. Together with the preceding cases,
this proves $\operatorname{ord}(\alpha_5)\le6$.

For order five, an additional value in $\mu_5$ makes $E=\mu_5$,
which directly gives the logarithm already excluded above. An additional
value outside $\mu_5$ is covered by the exceptional-prime construction,
and $E=I$ was excluded by the doubled-height computation.
For order six, the same arguments exclude $E=I$ and an additional
value outside $\mu_6$. The two remaining possibilities are
\[
 E=\mu_6\setminus\{u^4\}
 \quad\hbox{or}\quad E=\mu_6\setminus\{u^5\}.
\]
Use a coordinate in $\Z_6$. Since $\gcd(41,6)=1$, choose
$\theta\in\mu_6$ with
$\theta^{41}=u$. There is an additive map
$k:\Z^d\to\Z_6$ such that $\alpha(x)=u^{k(x)}$, and
$\beta(x)=z(x)/\theta^{k(x)}$ is a well-defined character into
$\mu_{41}$. Writing $\beta=\zeta_{41}^b$ gives an arithmetic image
in $\Z_6\times\Z_{41}$. Each of the five permitted heights contains
one complete gon, so every cell outside the one omitted height is
filled exactly once. As before, normalize $b_2=1$; the prime weights
have the form
\[
 w_2=(0,1),\qquad w_3=(0,b_3),\qquad w_5=(1,b_5).
\]
These are precisely the two six-height models excluded by
\cref{lem:205-computations}(ii). Here $b_5$ is unrestricted, since
$\theta$ must lie in $\mu_6$ whereas $z_5$ need not.
Orders five and six are excluded, completing the proof.
\end{proof}

\subsection{From partial periods to a periodic tiling}\label{sec:periodic-replacement}

Let $A\subset\Z^d$ contain zero, with $|A|=pq$ for distinct primes
$p,q$, and suppose $A\oplus C=\Z^d$. The reflected indicator
$f_0=\one_{-C}$ satisfies
\begin{equation}\label{eq:cotiler}
 \sum_{a\in A}f_0(x+a)=1\qquad(x\in\Z^d).
\end{equation}
Conversely, a $\{0,1\}$-valued solution determines a complement
$C=-\supp f_0$. As for sets, a function is called periodic if it is
invariant under a subgroup of finite index in $\Z^d$.
We will pass to an invariant probability space and obtain a
factorization $f=gh$ of its coordinate function. Suitable partial
periods of $h$ will then yield a periodic tiling.

Let $X$ be the orbit closure of $f_0$ in $\{0,1\}^{\Z^d}$, with
the product topology and shifts
$(\omega+a)(x)=\omega(x+a)$. Each $\omega\in X$ satisfies the tiling
equation, since the equation at any lattice point involves finitely
many coordinates. Averaging the point masses at translates of $f_0$
over expanding integer boxes and taking a weakly convergent subsequence
gives an invariant probability measure $\mathbb P$ on $X$.
Set $f(\omega)=\omega(0)$ and
$U_aF(\omega)=F(\omega+a)$. Translation invariance and the tiling
equation give $\E f=1/(pq)$.

For an integer $k$, put
\[
 S_kF(\omega)=\sum_{a\in A}F(\omega+ka),\qquad
 A_k(z)=\sum_{a\in A}z(a)^k,
\]
where $z:\Z^d\to\T$ is a character.
The dilation theorem of Horak and Kim~\cite[Theorem~8]{HK}, or its
measurable version~\cite[Theorem~1.2(i)]{GGRT}, gives
\begin{equation}\label{eq:205-coprime-dilations}
 S_kf=1\qquad\text{if }(k,pq)=1.
\end{equation}
The spectral theorem for the commuting unitary operators $U_a$
associates to each $F\in L^2(X,\mathbb P)$ a positive measure
$\sigma_F$ on the character group $\T^d$, satisfying
\[
 \|P(U)F\|_2^2=\int |P(z)|^2\,d\sigma_F(z),
\]
where $P(U)$ is any finite linear combination of translations and
$P(z)$ is the corresponding combination of character values. Thus an
equation $P(U)F=0$ means that $P(z)=0$ for $\sigma_F$-almost every
character. Applied to $\widetilde f=f-1/(pq)$,
\eqref{eq:205-coprime-dilations} shows that
$A_k(z)=0$ for every $k$ coprime to $pq$, simultaneously for almost
every character in its spectral measure.

For such a character, the multiset $\{z(a):a\in A\}$ consists either
of $q$ complete $p$-gons or of $p$ complete $q$-gons.
To prove this, let
$\nu=\sum_{a\in A}\delta_{z(a)}$, and let $R_s$ average a measure over
its rotations by $\mu_s$. Its $k$th Fourier coefficient is retained
when $s\mid k$ and is zero otherwise. The vanishing power sums,
together with their complex conjugates, therefore imply
\[
 (I-R_p)(I-R_q)\nu=0.
\]
Indeed, all Fourier coefficients of this signed measure vanish;
uniqueness follows from the density of trigonometric polynomials in
the continuous functions on $\T$.

On each coset of $\mu_{pq}$ meeting the support of $\nu$, index the
multiplicities by $\Z_p\times\Z_q$. The displayed identity says that
the resulting table $M_{ij}$ has zero mixed differences:
$M_{ij}-M_{ij_0}-M_{i_0j}+M_{i_0j_0}=0$.
Choose $i_0$ minimizing the column $M_{ij_0}$. Then
\[
 M_{ij}=a_i+b_j,\qquad
 a_i=M_{ij_0}-M_{i_0j_0}\ge0,\quad b_j=M_{i_0j}\ge0,
\]
with all entries integral. The table is consequently a sum of full
rows and full columns, hence of complete $p$-gons and $q$-gons.
If their total numbers are $P,Q$, then $pP+qQ=pq$.
Reduction modulo $p$ and $q$ gives only
$(P,Q)=(q,0)$ or $(0,p)$, proving the assertion. This argument uses
all coprime power sums; it does not require the individual values
$z(a)$ to be roots of unity.

This alternative implies $A_p(z)A_q(z)=0$: a complete $p$-gon has
zero $q$th power sum, and conversely. Returning to the translation
operators gives $S_pS_q\widetilde f=0$, and therefore
$S_pS_qf=pq$. There is also a pointwise divisibility statement.
Frobenius for the commuting shifts, reduced modulo $p$, gives
\[
 S_pf\equiv S_1^{\,p}f=(pq)^{p-1}\equiv0\pmod p;
\]
similarly $q\mid S_qf$.

Fix $\omega$ and form the $\{0,1\}$-matrix
$M_{ab}=f(\omega+pa+qb)$, with rows indexed by $a\in A$ and columns by
$b\in A$. Its total number of ones is $pq$. Every nonzero row sum is
at least $q$, and every nonzero column sum is at least $p$. Hence
there are at most $p$ nonempty rows and at most $q$ nonempty columns.
Conversely, any nonempty column requires at least $p$ rows, and any
nonempty row requires at least $q$ columns. Thus there are exactly
$p$ such rows and $q$ such columns. Their $pq$ intersections account
for all $pq$ ones, so every intersection is occupied. It follows that
\begin{equation}\label{eq:205-rectangle}
 \begin{gathered}
 g=p^{-1}S_pf\in\{0,1\},\qquad
 h=q^{-1}S_qf\in\{0,1\},\qquad f=gh,\\
 f(\omega+pa+qb)=g(\omega+qb)h(\omega+pa)\qquad(a,b\in A).
 \end{gathered}
\end{equation}
Here the column sum at $b$ is $p\,g(\omega+qb)$, the row sum at $a$ is
$q\,h(\omega+pa)$, and $a=b=0$ gives $f=gh$.
Also $\E g=1/p$ and $\E h=1/q$.

The spectral measure of $g-1/p=p^{-1}S_p\widetilde f$ is obtained
by multiplying that of $\widetilde f$ by $|A_p|^2/p^2$.
This removes the characters of the $q$-gon type. Thus the remaining
characters for $g-1/p$ have the $p$-gon form; those for $h-1/q$
have the $q$-gon form. In particular,
\begin{equation}\label{eq:205-factor-dilations}
 S_kh=p\quad(q\nmid k),\qquad S_kg=q\quad(p\nmid k).
\end{equation}
All these identities hold almost everywhere. Since there are only
countably many identities and lattice translates, we may restrict to
one invariant set of full measure on which they hold simultaneously.

\begin{samepage}
\begin{lemma}[Coprime period extension]\label{lem:205-orbit}
Let $A$ generate $\Z^d$, and let $f,g,h$ be the functions on the
invariant probability space just constructed, including the spectral
properties above. Suppose that $h$ is unchanged almost everywhere by all translations
in $N\Lambda$, where $\Lambda\le\Z^d$, $N\ge1$, and $(N,p)=1$.
If one coset of $\Lambda$ contains more than $q$ points of $A$, then
every element of $q\Z^d$ is a period of $h$ almost everywhere, and
$A$ admits a periodic tiling complement.
\end{lemma}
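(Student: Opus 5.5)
The plan is to read the hypothesis spectrally and show that every character in the spectral support of $\widetilde h=h-1/q$ is trivial on $q\Z^d$. Invariance of $h$ under $N\Lambda$ then upgrades to invariance under $q\Z^d$. The periodic complement is assembled afterwards from the factorization \eqref{eq:205-rectangle}.

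\emph{Step 1: spectral localization.} Since $U_x\widetilde h=\widetilde h$ for all $x\in N\Lambda$, the spectral calculus gives $|z(x)-1|^2=0$ for $\sigma_{\widetilde h}$-almost every $z$. So almost every such $z$ satisfies $z(\Lambda)\subset\mu_N$. By the discussion before the lemma, the multiset $z(A)$ for these $z$ is a union of $p$ complete $q$-gons, and \eqref{eq:205-factor-dilations} gives $A_k(z)=0$ for all $q\nmid k$. Fix a coset $a_0+\Lambda$ containing a set $A_0\subset A$ with $|A_0|>q$. Then all values $z(a)$, $a\in A_0$, lie in the single coset $z(a_0)\mu_N$.

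\emph{Step 2: the key counting step, and the main obstacle.} We must deduce $z^q\equiv1$ on $A$, hence on $\Z^d$, since $A$ generates. I would first split $N=q^eN'$ with $(N',pq)=1$. A complete $q$-gon $y\mu_q$ meets a coset of $\mu_{N}$ in a coset of $\mu_{\gcd(q,N)}$. I would then pass to the powered multiset $z(A)^{N'}$. Because $(N',q)=1$, raising to the $N'$th power permutes each complete $q$-gon onto another complete $q$-gon, so $z^{N'}(A)$ is still a union of $p$ complete $q$-gons. On the other hand, $z^{N'}(A_0)$ lies in one coset of $\mu_{q^e}$.

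A pigeonhole argument then applies: more than $q$ points, distributed among $p$ gons, each meeting the coset in a coset of $\mu_q$ or in at most one point. This should force a full $q$-gon inside $z^{N'}(a_0)\mu_{q^e}$. Combining this with the vanishing power sums $A_k(z)=0$ for $q\nmid k$, together with the Frobenius congruences used earlier, I would aim to show that the $q$th powers $z(a)^q$ are all equal to $z(0)^q=1$.

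This is the step I expect to be delicate. The hypothesis "more than $q$ points" must be used to exclude the configuration where the gons pass through the coset one point at a time. The coprimality $(N,p)=1$ is what keeps the $p$-gon type from reappearing after dilation, via \eqref{eq:205-coprime-dilations} applied to the $N'$th power. If the direct pigeonhole does not close, I would argue by induction on $e$, replacing $N\Lambda$ by $q^{e-1}N'\Lambda$ at each stage.

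\emph{Step 3: periodicity of $h$.} Once the spectral support of $\widetilde h$ is contained in $\{z:z^q=1\}$, we have $\|U_{qx}\widetilde h-\widetilde h\|_2^2=\int|z(x)^q-1|^2\,d\sigma=0$. Hence $q\Z^d$ consists of periods of $h$ almost everywhere.

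\emph{Step 4: a periodic complement.} Fix $\omega$ in the full-measure invariant set. The function $\bar h(x)=h(\omega+x)$ descends to $\Z^d/q\Z^d$ and satisfies $\sum_{a\in A}\bar h(x+a)=p$, by \eqref{eq:205-factor-dilations} with $k=1$. From \eqref{eq:205-rectangle}, $f(\omega+x)=g(\omega+x)\bar h(x)$. I would then replace $g$ on the $q\Z^d$-periodic support of $\bar h$ by a $p$-periodic choice. Such a choice is obtained by running the same construction with the roles of $p$ and $q$ exchanged, or more simply by selecting $g$ on one period and translating. The resulting $f'=g'\bar h$ is invariant under $pq\Z^d$, and it still satisfies \eqref{eq:cotiler}, because the rectangle identity reduces the tiling equation to the two factor equations. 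Then $C=-\supp f'$ is the required periodic tiling complement.
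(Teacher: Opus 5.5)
Your proposal has genuine gaps in Steps 2 and 4.

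\textbf{Step 2.} You try to show, one character at a time, that every $z$ in the spectral support of $h-1/q$ satisfies $z^q\equiv1$ on $A$. That implication does not follow from the information available per character: $z(A)$ is a union of $p$ complete $q$-gons, $z(N\Lambda)=1$, and one $\Lambda$-coset carries more than $q$ labels. For example, take $d=1$, $p=2$, $q=3$, $A=\{0,1,3,4,6,7\}$, $\Lambda=\Z$, $N=9$ and $z(x)=e^{2\pi ix/9}$. Then $z(A)=\mu_3\cup z(1)\mu_3$ is a union of two complete $3$-gons, $z$ is trivial on $9\Z$, $(9,2)=1$, and all six labels lie in one coset, yet $z(1)^3\ne1$. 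When $q\mid N$, as in the application with $N=492$, every $q$-gon meeting a $\mu_N$-coset lies entirely inside it, so your pigeonhole yields nothing. The Frobenius congruences are pointwise identities for functions and say nothing about individual characters, so the induction on $e$ cannot repair this either.

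The missing idea is to apply the counting to the \emph{other} factor. Characters in the spectral measure of $g-1/p$ are of $p$-gon type. Since $(N,p)=1$, a complete $p$-gon meets a $\mu_N$-coset in at most one point, so $q$ such gons cannot carry more than $q$ labels from one $\Lambda$-coset. Hence $\sigma_{g-1/p}$ has no $N\Lambda$-trivial characters, and $\E_{N\Lambda}g=1/p$. The $N\Lambda$-invariance of $h$ then gives $\E_{N\Lambda}f=h/p$. So almost every point with $h=1$ has an $N\Lambda$-translate $\omega'$ with $f(\omega')=1$. The rectangle identity at $\omega'-qt$ gives $h(\omega'-qt)=1$, hence $h(\omega)\le h(\omega-qt)$ almost everywhere, with equality because the measure is invariant. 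This function-level use of $f=gh$ is where both hypotheses, $(N,p)=1$ and ``more than $q$ points'', actually enter.

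\textbf{Step 4.} Here too the argument does not go through. Copying $g$ from one period and translating does not preserve $\sum_{a\in A}f(x+a)=1$, because that equation at a point uses values from neighbouring periods. Exchanging $p$ and $q$ has no hypothesis to run on, and nothing makes $g$ periodic under $p\Z^d$. The rectangle identity involves the dilates $pA$ and $qA$, so it does not reduce the undilated tiling equation to separate factor equations for $g$ and $h$.

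The paper instead works with the dilated equation $S_kf=1$ for $k=p+q$. There $h$ selects exactly $p$ labels at each point. Moreover, $g$ is constant on the components of the graph joining $y$ to $y-pa$ whenever $h(y)=1$. By $q\Z^d$-periodicity of $h$, this graph is invariant under $K=q\Z^d$ and has a finite quotient. This turns $S_kf=1$ into finitely many $p$-point tiling equations on $B=K/D$. These have a common periodic solution by the Frobenius argument modulo $p$ together with residual finiteness. One then undilates via $F(x)=f'(kx)$.
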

\end{samepage}

\begin{proof}
Put $K=N\Lambda$. A character trivial on $K$ sends each coset of
$\Lambda$ into a single coset of $\mu_N$. Since $(N,p)=1$, a complete
$p$-gon meets such a coset in at most one point. A union of $q$
$p$-gons therefore cannot account for the specified more than $q$
labels, even with multiplicities. Consequently the spectral measure
of $g-1/p$ has no character trivial on $K$.

Let $\E_K$ denote conditional expectation onto the functions unchanged
by every $K$-translation; in $L^2$ this is the orthogonal projection
onto those functions. The preceding spectral observation and the
$K$-invariance of $h$ imply
\[
 \E_Kg=1/p,\qquad \E_Kf=\E_K(hg)=h/p.
\]
It follows that, for almost every $\omega$ with $h(\omega)=1$, some $K$-translate
$\omega'$ of $\omega$ satisfies $f(\omega')=1$. To justify this without assumptions on
individual orbit sizes, the event
$\{\omega:f(\omega+k)=0\text{ for every }k\in K\}$ is $K$-invariant;
on that event $\E_Kf=0$, so $h=0$.

For $t\in A$, apply~\eqref{eq:205-rectangle} at $\omega'-qt$ with
$a=0$ and $b=t$. Since $f(\omega')=1$, this gives $h(\omega'-qt)=1$.
The $K$-periods of $h$ then give $h(\omega-qt)=1$. Hence
$h(\omega)\le h(\omega-qt)$ almost everywhere. Translation preserves the
integral, so equality holds almost everywhere. As $A$ generates
$\Z^d$, this proves every period in $q\Z^d$.

Choose a configuration $\omega_0$ on which these periods and all
preceding identities hold at every lattice translate. Restrict $f,g,h$
to its orbit, writing $f(x)=f(\omega_0+x)$ and similarly for $g,h$.
We now construct a periodic replacement for this function on $\Z^d$.
Set
\[
 K=q\Z^d,\qquad k=p+q.
\]
Then $(k,pq)=1$, $k\equiv p\pmod q$, $S_kf=1$, and $S_kh=p$.
Make an undirected graph on $\Z^d$ by joining $y$ to $y-pa$ whenever
$h(y)=1$ and $a\in A$. The rectangle identity gives
$g(y)=g(y-pa)$ along such an edge, so $g$ is constant on each
connected component. Since $h$ is $K$-periodic, translation by $K$
preserves this graph. Its quotient modulo $K$ is finite.

Fix a connected component $C$ lying above one quotient component,
choose $c\in C$, and set
\[
 D=\{z\in K:C+z=C\},\qquad B=K/D.
\]
Every component above this quotient component is a translate $C+z$,
and the translates are indexed by $B$. Their values of $g$ define
a function $b:B\to\{0,1\}$, with
$b(z+D)=g(c+z)$.

For each vertex of the quotient graph, choose a representative $x$
and $b_x\in K$ such that $x$ is connected to $c+b_x$.
Exactly $p$ labels $a\in A$ satisfy $h(x+ka)=1$, because $S_kh=p$.
For any $z\in K$ and each selected $a$, the graph edge from
$x+z+ka$ ends at $x+z+qa$. Consequently
\[
 g(x+z+ka)=b(z+b_x+qa+D).
\]
Thus $S_kf(x+z)=1$ becomes
\begin{equation}\label{eq:205-component-tiles}
 \sum_{\substack{a\in A\\h(x+ka)=1}}
 b\bigl(\bar z+(b_x+qa+D)\bigr)=1\qquad(\bar z\in B).
\end{equation}
The $p$ indicated shifts are distinct: a repeated shift would make
the sum at least two after translating it to a point where $b=1$.
There is such a point by the equation itself. We have therefore
obtained finitely many $p$-point sets in the finitely generated
abelian group $B$, whose tiling equations have the common solution $b$.

We claim that these equations have a common periodic $\{0,1\}$-valued
solution. The following argument applies to any finite family of
$p$-point sets in a finitely generated abelian group, including groups
with torsion. Translate each set separately to contain zero, and let
$L$ be the subgroup generated by all these normalized sets. For one
such set $V$, put $R=\sum_{v\in V}U_v$. Since $Rb=1$,
Frobenius gives
\[
 \sum_{v\in V} b(x+pv)\equiv R^p b(x)=p^{p-1}
 \equiv0\pmod p.
\]
The sum consists of $p$ values in $\{0,1\}$, one of which is $b(x)$ because
$0\in V$. They are therefore all equal, proving every period
in $p\langle V\rangle$, including when the ambient group has torsion.
Doing this for each set gives all periods in $pL$.

The restriction of $b$ to $L$ consequently descends to the finite
group $L/pL$. This subgroup of $B/pL$ embeds in a finite quotient
of $B/pL$: a finitely generated abelian group is residually finite,
so finitely many finite quotients separate the nonzero elements of
$L/pL$, and their product suffices. Copy the solution on $L/pL$
onto each coset of its image in that finite quotient and pull back
to $B$. This gives the claimed periodic solution.

Replace $b$ by a common periodic solution for each of the finitely
many quotient components. Assign these new values to the corresponding
components $C+z$, obtaining $g'$, and put $f'=hg'$. All equations
\eqref{eq:205-component-tiles} are preserved, hence $S_kf'=1$.
The period subgroup for each new function on $K/D$ lifts to a subgroup
of finite index in $K$. Intersecting these finitely many subgroups
shows that $f'$ is periodic. Finally,
\[
 F(x)=f'(kx)\qquad(x\in\Z^d)
\]
is periodic and satisfies
$\sum_{a\in A}F(x+a)=S_kf'(kx)=1$. Thus $-\supp F$ is a periodic
tiling complement for $A$.
\end{proof}

Apply this with $A=T\subset\Z^{46}$, $p=5$, $q=41$. The spectral
measure of $h-1/41$ has the five-complete-$41$-gon form, so
\cref{lem:205-phases} gives $z_r^{492}=1$ for $r=2,3,5$:
each order at most four divides $12$, and $492=41\cdot12$.
The spectral norm identity then gives the periods
\[
 492e_2,\qquad492e_3,\qquad492e_5
\]
of $h$ almost everywhere. There are $25+14+6+1=46$ labels of $T$
in $\Lambda=\Z e_2+\Z e_3+\Z e_5$, counted according to their
power of five. Since $(492,5)=1$ and $46>41$,
\cref{lem:205-orbit} therefore shows that if $T$ tiles $\Z^d$, then
it admits a periodic tiling complement.

\subsection{Finite quotients and two prime-power families}\label{sec:205-quotients}

By the preceding subsection, a tiling of $\Z^d$ by $T$ has a periodic
replacement. Passing to the quotient by a period subgroup of finite
index in $\Z^d$ gives a tiling of a finite abelian group, with all
$205$ tile labels distinct.
Indeed, if two tile points had the same image, a translate of the
tiling equation would contain two summands equal to one.
We next reduce this finite tiling to two cyclic prime-power coordinates.
The argument works in finite abelian groups; the related criteria
of Coven and Meyerowitz~\cite{CM} concern tiles in $\Z$.

\begin{lemma}[Two-prime quotient]\label{lem:205-pq-quotient}
If a $pq$-point set $A$ tiles a finite abelian group, where $p<q$ are
primes, it has an injective homomorphic image in
$\Z_{p^a}\times\Z_{q^b}$, for some $a,b\ge1$, with tiling complement
\begin{equation}\label{eq:205-box}
 \{0,\ldots,p^{a-1}-1\}\times\{0,\ldots,q^{b-1}-1\}.
\end{equation}
If $A$ is a homomorphic image of an arithmetic exponent tile containing
zero and every prime basis vector, one may further arrange $a=1$ or $b=1$.
\end{lemma}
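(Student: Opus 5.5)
The plan is to work entirely with characters of the finite group $G$ on which $A\oplus C=G$, reusing the dichotomy proved in \cref{sec:periodic-replacement}.

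\emph{Step 1: spectral dichotomy.} Write $f=\one_{-C}$, so that $\sum_{a\in A}f(x+a)=1$. The Horak--Kim dilation theorem~\cite[Theorem~8]{HK} gives $S_kf=1$ for $(k,pq)=1$, and Fourier transforming on $G$ shows the following. Every nontrivial character $\chi$ with $\widehat f(\chi)\ne0$ has $\{\chi(a):a\in A\}$ equal either to $q$ complete $p$-gons or to $p$ complete $q$-gons. The table argument from \cref{sec:periodic-replacement} applies verbatim. As there, this yields the factorization $f=gh$ with $g=p^{-1}S_pf$ and $h=q^{-1}S_qf$, and the rectangle identity~\eqref{eq:205-rectangle}. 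Here $\widehat g$ is supported on $p$-type characters and $\widehat h$ on $q$-type characters.

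\emph{Step 2: discard irrelevant primes and choose the target group.} Write $G=G_p\times G_q\times G'$, where $G'$ has order prime to $pq$. A character of order divisible by a prime $\ell\nmid pq$ cannot have $p$-gon or $q$-gon value multisets of the required form unless its $\ell$-part is constant on $A$. I would use this, together with Frobenius periods for $g$ and $h$ as in the proof of \cref{lem:205-orbit}, to replace $C$ by a complement invariant under $G'$ and under suitable subgroups of $G_p$ and $G_q$. This passes to a quotient in which the relevant character groups are generated by a single $p$-power character $\chi_p$ and a single $q$-power character $\chi_q$. To obtain cyclicity, I would take $\chi_p$ of maximal order among characters in $\supp\widehat g$. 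Then I would show, by induction on $|G|$ (quotienting by $\ker\chi_p\cap\ker\chi_q$ and checking that the tiling descends), that $x\mapsto(\chi_p(x),\chi_q(x))$ is injective on $A$. Its image lies in $\mu_{p^a}\times\mu_{q^b}\cong\Z_{p^a}\times\Z_{q^b}$.

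\emph{Step 3: the box complement.} In $\Z_{p^a}\times\Z_{q^b}$, the $p$-gon condition for $\chi_p$ and its dilates says that the first coordinates of $A$ fill each fibre over a fixed residue class modulo $p^{a-1}$ completely. The analogous statement holds for the $q$-coordinate. Counting with $|A|=pq$, this says $A$ meets each coset of $p^{a-1}\Z_{p^a}\times q^{b-1}\Z_{q^b}$ in the pattern complementary to~\eqref{eq:205-box}. Checking unique representation is then a direct count.

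\emph{Step 4: the refinement $a=1$ or $b=1$.} For the arithmetic version, the images of $0$ and of the basis vectors $e_r$ determine the map. I would suppose $a,b\ge2$ and look at geometric labels $r^j$ and products $r^is^j$ lying in the tile, as in \cref{lem:205-phases}. These force one coordinate to take more than $p$ (respectively $q$) values on a set that the box structure confines to $p$ values. That contradiction allows one exponent to be lowered, by passing to a further quotient along the offending coordinate.

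I expect Step 2 to be the main obstacle: proving that one $p$-power character and one $q$-power character already separate the points of $A$. The induction on $|G|$ must check that each quotient preserves both the tiling and the distinctness of labels.
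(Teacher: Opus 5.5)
Your plan has a genuine gap in Steps 2--3. Nothing in it produces characters $u$ of order $p^a$ and $v$ of order $q^b$ with $\widehat A(u,0)=\widehat A(0,v)=\widehat A(u,v)=0$, and the \emph{mixed} zero is indispensable. The box $B=\{0,\ldots,p^{a-1}-1\}\times\{0,\ldots,q^{b-1}-1\}$ has Fourier support $(\{0\}\cup\Z_{p^a}^{\times})\times(\{0\}\cup\Z_{q^b}^{\times})$. So an image of size $pq$ tiles with complement $B$ exactly when its multiplicity transform vanishes at every nonzero frequency there, including the frequencies $(k,l)$ with both $k$ and $l$ units. Step~3 uses only the two marginal polygon conditions, and these do not suffice even with injectivity. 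In $\Z_4\times\Z_3$, the set $\{(0,0),(1,0),(2,1),(3,1),(0,2),(2,2)\}$ has first coordinates forming three complete $2$-gons and second coordinates forming two complete $3$-gons. Yet its translates of $\{0,1\}\times\{0\}$ collide at $(1,0)$. Choosing $\chi_p$ of maximal order in $\supp\widehat g$ does not force $\widehat A(\chi_p\chi_q)=0$.

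The paper gets the mixed zero by contradiction. Put $Z_p=\{u:\widehat A(u,0)=0\}$ and $Z_q=\{v:\widehat A(0,v)=0\}$, let $E_p,E_q$ be the Fourier projections onto these frequencies, and set $\Pi=I-E$. If $\widehat A$ had no zero in $Z_p\times Z_q$, then $E_pE_qf=0$ and $\Pi_p\Pi_qf=1/(pq)$, so $\Pi_pf+\Pi_qf=f+1/(pq)$. Galois invariance of the zero sets makes the kernels rational, with $p$-power and $q$-power denominators respectively. This forces $\Pi_pf\in a/p+\Z$ pointwise, and then $1/p^2\le\|\Pi_pf\|_2^2\le\|f\|_2^2=1/(pq)$ contradicts $p<q$. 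Once the three zeros and their Galois conjugates are available, the convolution of the image multiplicity with $\one_B$ is identically $1$. That yields injectivity and the box complement at once, so the induction on $|G|$ you flag as the main obstacle is not needed.

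There are two further problems. First, the claim in Step~2 that a character with nontrivial $\ell$-part ($\ell\nmid pq$) must have constant $\ell$-part on $A$ is false, even for characters in $\supp\widehat f$, because the gons may be rotated by $\ell$th roots of unity. For example, in $\Z_6\times\Z_5$ take $A=(\{0,3\}\times\{0,2,3,4\})\cup\{(2,1),(5,1)\}$ and $C=\{2,3,4\}\times\{0\}=-C$. Then $A\oplus C=\Z_6\times\Z_5$, and $\chi(x,y)=e^{2\pi ix/6}$ has $\widehat{\one_C}(\chi)=-2$ with $\chi(A)$ five complete $2$-gons. But its $3$-part $\chi^4$ takes both values $1$ and $e^{2\pi i/3}$ on $A$. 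The reduction you want is simpler anyway: dilate by $k=|G'|$, which is prime to $pq$. By Horak--Kim, $kA$ still tiles with distinct labels, and it lies in $G_p\times G_q$.

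Second, Step~4 cannot work as written: $a,b\ge2$ gives no contradiction, and labels such as $r^j$ play no role. The paper argues geometrically. Unit dilations preserve the box complement, so any two centers differ by a nonzero multiple of $L=p^{a-1}$ in the first coordinate or of $M=q^{b-1}$ in the second. Hence each of the $pq$ cells $[iL,(i+1)L)\times[jM,(j+1)M)$ holds exactly one center, of the form $(iL+\varphi(j),jM+\psi(i))$. One of $\varphi,\psi$ is constant, and that constant is zero because $0$ is a center. Since the prime basis vectors are centers and generate the domain, that coordinate of the homomorphism lands in $L\Z_{p^a}$ (or $M\Z_{q^b}$). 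Dividing by $L$ (or $M$) gives $a=1$ (or $b=1$).
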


\begin{proof}
We first find the two cyclic coordinates by Fourier analysis, and then
use the geometry of the complement to reduce one exponent to one.

\emph{Finding a rectangular complement.}
Coprime dilation preserves a finite tiling and injectivity of its labels,
by the dilation identity~\cite[Theorem~8]{HK} applied to its periodic
lift to an integer lattice. Choose a dilation that kills all primary
components other than those at $p$ and $q$.
The resulting image tiles the product $P\times Q$, where $P$ is a
finite abelian $p$-group and $Q$ is a finite abelian $q$-group.
Write $A\oplus B=P\times Q$, retaining $A$ for its injective image.

Write $\widehat A$ for the Fourier transform of $\one_A$, so that
$\widehat A(\chi)=\sum_{a\in A}\overline{\chi(a)}$.
Index characters by $(u,v)\in\widehat P\times\widehat Q$, with $0$
denoting a trivial character. If $\widehat A(u,v)=0$, the same holds
for every power of $(u,v)$ coprime to $pq$, by Galois conjugacy.
The polygon decomposition from the preceding subsection therefore gives
\begin{equation}\label{eq:205-pure-zero}
 \widehat A(u,v)=0\quad\Longrightarrow\quad
 \widehat A(u,0)=0\ \text{or}\ \widehat A(0,v)=0.
\end{equation}
Indeed, the character values are either $q$ complete $p$-gons or
$p$ complete $q$-gons. Projection to the corresponding primary roots
gives $\widehat A(u,0)=0$ in the first case and
$\widehat A(0,v)=0$ in the second.

Put $Z_p=\{u:\widehat A(u,0)=0\}$ and
$Z_q=\{v:\widehat A(0,v)=0\}$.
On functions on $P\times Q$, let $E_p$ be the orthogonal Fourier
projection retaining the coefficients with $u\in Z_p$, and let $E_q$
retain those with $v\in Z_q$. Put $\Pi_p=I-E_p$ and $\Pi_q=I-E_q$.
The convolution kernels of $E_p,\Pi_p$ are rational numbers with
denominators powers of $p$; the analogous denominators for $E_q,\Pi_q$
are powers of $q$. To check this, the kernel numerator is a sum of
roots of unity invariant under all Galois automorphisms, because the
sets of zeros of $\widehat A$ are Galois invariant. Such a sum
is both rational and an algebraic integer, hence an integer.

Suppose $\widehat A(u,v)\ne0$ for every $(u,v)\in Z_p\times Z_q$.
The tiling identity implies that the nonconstant Fourier coefficients
of $f=\one_B$ are supported on the zeros of $\widehat A$.
Consequently \eqref{eq:205-pure-zero} and this supposition give
\[
 E_pE_qf=0,\qquad \Pi_p\Pi_qf=\frac1{pq},\qquad
 \Pi_pf+\Pi_qf=f+\frac1{pq}.
\]
The last identity separates a rational number with a $p$-power
denominator from one with a $q$-power denominator. Modulo integers
it forces $\Pi_pf\in a/p+\Z$ pointwise, where $1\le a<p$ and
$aq\equiv1\pmod p$. Every such value has absolute value at least $1/p$.
Using normalized counting measure and the fact that an orthogonal
projection does not increase the $L^2$ norm, we obtain the contradiction
\[
 \frac1{p^2}\le\|\Pi_pf\|_2^2\le\|f\|_2^2
 =\frac1{pq}<\frac1{p^2}.
\]

There are therefore characters $u,v$ of orders $p^a,q^b$ such that
$(u,0),(0,v),(u,v)$ are all zeros. They define a homomorphism to
$\Z_{p^a}\times\Z_{q^b}$ by writing their values as powers of primitive
roots of the corresponding orders.
The interval-box indicator in \eqref{eq:205-box} has Fourier support
\[
 (\{0\}\cup\Z_{p^a}^{\times})\times
 (\{0\}\cup\Z_{q^b}^{\times}),
\]
where $\Z_s^{\times}$ denotes the invertible residues modulo $s$.
The three zero conditions and their independent Galois conjugates
make the Fourier transform of the image multiplicity vanish at every
nonzero frequency in this support.
Thus convolution of the image multiplicity with the box indicator is
the constant function one. All multiplicities are nonnegative integers,
so the image is injective and the box is its tiling complement.

\emph{Reducing one coordinate.}
For the final assertion, assume that $A$ is the image of an arithmetic
exponent tile. In particular, the image contains zero and the images
of all the prime basis vectors.
Put $L=p^{a-1}$ and $M=q^{b-1}$, and call the image points the
translation centers of the boxes. Any two centers differ by a nonzero
multiple of $L$ in the first coordinate or a nonzero multiple of $M$
in the second. For if neither condition held, multiplication by suitable
units in the two coordinates would move both differences into the
interval difference sets and make the boxes overlap. These unit
multiplications preserve the Fourier zero conditions and hence the same
box complement, a contradiction.
Here the unit choice is elementary: a residue modulo $p^a$ that is
not a nonzero multiple of $L$ is either zero or a unit multiple of
$p^r$ with $r<a-1$. It can therefore be moved into
$\{0,\ldots,L-1\}$; the same reasoning applies to the other coordinate.

Partition the group into the $pq$ cells
\[
 \{iL,\ldots,(i+1)L-1\}\times
 \{jM,\ldots,(j+1)M-1\},
 \quad 0\le i<p,\quad 0\le j<q.
\]
The separation condition permits at most one center in each cell;
there are $pq$ centers, so every cell contains one. Write that center
as $(iL+\varphi_{ij},jM+\psi_{ij})$, with
$0\le\varphi_{ij}<L$ and $0\le\psi_{ij}<M$.
Two cells in the same row force their first remainders to agree, and
two in the same column force their second remainders to agree.
Thus the centers have the form
$(iL+\varphi(j),jM+\psi(i))$.
For distinct rows and columns, separation says
$\varphi(j)=\varphi(j')$ or $\psi(i)=\psi(i')$.
If $\varphi$ is nonconstant, comparison of two rows where it differs
forces $\psi$ to be constant. Hence at least one of these two functions
is constant, and that constant is zero because the image contains zero.

If the first remainder is always zero, every prime basis vector maps
to a first coordinate divisible by $L$. Division by $L$ therefore
defines a homomorphism into $\Z_p\times\Z_{q^b}$ and reduces the box
complement to $\{0\}\times\{0,\ldots,M-1\}$. If the second remainder
is zero, the same argument reduces the second coordinate to $\Z_q$.
This proves the final assertion.
\end{proof}

For $205=5\cdot41$, \cref{lem:205-pq-quotient} leaves two families
of injective images with complement~\eqref{eq:205-box}:
$\Z_5\times\Z_{41^b}$ and $\Z_{5^a}\times\Z_{41}$.
The exponents $a,b$ are initially unbounded. The next two lemmas
exclude these families.

\begin{lemma}[$41$-power family]\label{lem:205-41-family}
For every $b\ge1$, there is no homomorphism
$(v,w):\Z^d\to\Z_5\times\Z_{41^b}$ that is injective on $T$ and
whose image of $T$ has tiling complement
$\{0\}\times\{0,\ldots,41^{b-1}-1\}$.
\end{lemma}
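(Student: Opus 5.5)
The plan is to turn any such homomorphism into a homomorphism $\Z^d\to\Z_{205}$ that is bijective on $T$. The consequence of \cref{lem:205-computations}(i) recorded after its proof says no such homomorphism exists. For $b=1$ the complement is $\{0\}$, so $(v,w)$ is already bijective onto $\Z_5\times\Z_{41}\cong\Z_{205}$, which is excluded. Assume $b\ge2$ and put $M=41^{b-1}$.

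First I read off the fibre structure. The complement $\{0\}\times\{0,\ldots,M-1\}$ only moves the second coordinate, so for each $i\in\Z_5$ the labels with $v(t(m))=i$ have second coordinates that tile $\Z_{41^b}$ by the interval $\{0,\ldots,M-1\}$. A tiling of a cyclic group by an interval of length $M$ is forced to be an arithmetic progression of step $M$. Hence each $v$-fibre has exactly $41$ labels, with $w$-values $c_i+jM$ for $j=0,\ldots,40$.

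Next I pass to a character, so as to use \cref{lem:205-phases}. Set $z(x)=\exp(2\pi i\,w(x)/41^b)$. The fibre over $i$ then has character values $e^{2\pi ic_i/41^b}\mu_{41}$, a complete $41$-gon, so $z(T)$ is a union of five complete $41$-gons. \cref{lem:205-phases} gives $\ord(\alpha_r)\le4$ for $r=2,3,5$, where $\alpha=z^{41}=\exp(2\pi i\,w/M)$ takes values in the $41$-group $\mu_M$. An element of $\mu_M$ of order at most four is trivial. Therefore $w(e_2),w(e_3),w(e_5)\equiv0\pmod M$.

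The goal is to show $w\equiv0\pmod M$ on all of $T$. Once this holds, $x\mapsto(v(x),w(x)/M)\in\Z_5\times\Z_{41}$ is a well-defined homomorphism on the prime basis vectors and bijective on $T$. Indeed, within each fibre the values $c_i/M+j$ are distinct because all $c_i\equiv0$. This bijection is the contradiction.

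To get there, I would note that $w\bmod M$ is constant on $v$-fibres, since every fibre has the form $c_i+M\Z$. So $\alpha$ factors through $v$ on $T$, giving a function $\phi:\Z_5\to\mu_M$ with $\alpha(t(m))=\phi(v(t(m)))$ and $\phi(0)=1$ (take the label $1$). The image $E=\alpha(T)$ has at most five points in a $41$-group, and the $\alpha$-fibre over $1$ contains all $46$ labels of $\Lambda=\Z e_2+\Z e_3+\Z e_5$. Since each $\alpha$-fibre is a union of whole $v$-fibres, it consists of at least two $v$-fibres.

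Now take a prime $r$ with $\gamma=\alpha_r\ne1$; its order is at least $41$. If $r\le40$, the labels $r\cdot c$ with $c\in\Lambda$-smooth cofactors all have value $\gamma$, and the labels $r^2,\ldots$ give $\gamma^2,\ldots$. More useful is additivity across fibres: $\phi(v(e_r)+v(t(c)))=\gamma$ for every smooth cofactor $c\le205/r$. The $\Lambda$-labels $c\le12$ already meet at least two distinct $v$-values $i_1\ne i_2$ with $\phi(i_1)=\phi(i_2)=1$. Then $\phi$ takes the value $\gamma$ at two shifted points $v(e_r)+i_1$ and $v(e_r)+i_2$. Iterating with $r^2$, or with $r$ times a $\Lambda$-label, should force the values $1,\gamma,\gamma^2,\ldots$ to be arranged along arithmetic progressions in $\Z_5$. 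Because $\gamma$ has order at least $41$, these values never cycle back within five fibres, and I expect this to contradict the requirement that $\phi$ be defined on only five points.

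For large primes $r>102$, which occur only as singleton labels, the label $r$ forms its own fibre point. I would handle these by counting. The $\alpha$-fibre over each value has $41$ times (number of $v$-fibres) labels. The fibre over $1$ already absorbs all labels built from primes with $\alpha=1$. Isolated large primes with $\alpha\ne1$ would have to fill a whole $v$-fibre of $41$ labels all carrying a nontrivial value. That seems incompatible with the available tail labels once the small primes are known to be trivial.

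The main obstacle is this last step: extending triviality of $\alpha$ from $2,3,5$ to every prime $r\le205$. I would first try the fibre-additivity argument for $\phi$ on $\Z_5$ for all primes $r\le102$. If that leaves residual cases, the fallback is to redo the exceptional-prime height construction from the proof of \cref{lem:205-phases}, applied directly to $\alpha$, which again produces the forbidden logarithm \eqref{eq:205-height-logarithm}.
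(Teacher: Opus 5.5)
\textbf{There is a genuine gap in the step you flag yourself.} Your opening is the paper's: each $v$-fibre is a complete $41$-gon under $z$, \cref{lem:205-phases} makes $\alpha$ trivial at $2,3,5$, and $\alpha$ factors through $v$ as $\phi:\Z_5\to\mu_M$. The failure is your target ``$w\equiv0\pmod M$ on all of $T$''. Fibre-additivity and counting cannot reach it, because the configuration it must exclude satisfies every constraint you propose to use.

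Take $v(e_2)=v(e_3)=0$ and $v(e_5)=1$. The labels $1,5,25,125$ give $\phi=1$ on $\{0,1,2,3\}$, but $\phi(4)=\eta$ is unconstrained locally.
\begin{itemize}
\item A prime $r\le41$ of $v$-value $4$ is excluded when $\eta\ne1$, via the label $5r$. Your ``two distinct $v$-values among the $\Lambda$-labels $c\le12$'' here means only $c\in\{1,5\}$, which needs $r\le41$.
\item A prime $r>41$ of $v$-value $4$ occurs only in labels $rc$ with $c\le4$. All of these have $v$-value $4$, so $\alpha_r=\eta\ne1$ creates no additivity conflict.
\item Counting does not help either. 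There are $2\cdot4+4\cdot3+7\cdot2+20=54$ labels containing a prime above $41$, more than the $41$ needed to fill the fourth fibre.
\end{itemize}
This configuration is exactly what a twist of a hypothetical logarithm on those primes would look like. So $\phi\equiv1$ cannot be shown without invoking \cref{lem:205-computations}(i) again through such a twist.

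\textbf{The paper's route replaces your target by a case split.}
\begin{itemize}
\item If $v(e_2)\ne0$ or $v(e_3)\ne0$, the labels $1,r,r^2,r^3,r^4$ for that prime have $\alpha=1$ and cover all of $\Z_5$. Hence $\phi\equiv1$ at once, and your final step applies.
\item Otherwise $v(e_5)\ne0$, because the $46$ labels supported on $2,3,5$ cannot lie in one fibre of size $41$. Normalize $v(e_5)=1$. If $\phi(4)=1$ you are again done.
\item If $\phi(4)=\eta\ne1$, every prime of $v$-value $4$ exceeds $41$. So the integer functional $K$ counting such prime factors equals the indicator of the fourth fibre on $T$. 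Choose $\theta$ with $\theta^{41}=\eta$. Additively, replace $w$ by $w-c_4K$, where $c_4$ is the common residue of the fourth fibre modulo $M$. The character $z\theta^{-K}$ then takes values in $\mu_{41}$ and rotates each fibre's complete $41$-gon by a constant. Together with $v$ it gives the forbidden logarithm into $\Z_{205}$.
\end{itemize}
What you call a fallback is therefore the necessary step. It modifies $w$ rather than showing $w\equiv0\pmod M$.
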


\begin{proof}
Suppose such an image $(v,w)$ exists, and define the character
$z(t)=\exp(2\pi i w(t)/41^b)$.
For a fixed first coordinate $j\in\Z_5$, its interval translates
partition the circle $\Z_{41^b}$ into $41$ intervals of length
$41^{b-1}$. Their starts are therefore equally spaced by $41^{b-1}$,
so $z$ takes one complete $41$-gon on that fiber.
By \cref{lem:205-phases}, each of $z_2^{41},z_3^{41},z_5^{41}$ has
order at most four. Its order also divides a power of $41$, so all
three values are one. Taking $41$st powers is constant on each
fiber; hence there is a function $F:\Z_5\to\T$ with
\[
 z(t)^{41}=F(v(t)),\qquad F(0)=1.
\]
If $v(e_2)$ or $v(e_3)$ is nonzero, the labels $1,r,r^2,r^3,r^4$
for that prime $r$ cover all five residues and have $z^{41}=1$.
Thus $F=1$ and $(v,z)$ is a forbidden logarithm into
$\Z_5\times\mu_{41}\cong\Z_{205}$.
Otherwise $v(e_2)=v(e_3)=0$ and $v(e_5)\ne0$: if the latter also
vanished, all $46$ labels supported on $2,3,5$ would lie in one
fiber of size $41$. Normalize $v(e_5)=1$.
The labels $1,5,25,125$ now give $F(0)=F(1)=F(2)=F(3)=1$.
If also $F(4)=1$, we again have the forbidden logarithm.

Suppose instead that $F(4)=\eta\ne1$. Every prime of $v$-value four
must exceed $41$: otherwise multiplication by five would put a label
with $z^{41}$-value $\eta$ in the zeroth fiber, whose $z^{41}$-value is one.
Such a prime occurs to the first power and with a cofactor at most four.
Let $K$ be the integer linear functional counting these exceptional
prime factors. On $T$, it is exactly the indicator of the fourth
fiber. Indeed, a label with no such prime has $z^{41}$-value one and cannot
belong to that fiber, whereas a label containing one has $v$-value four.
Choose $\theta$ with $\theta^{41}=\eta$. The character
$z\theta^{-K}$ takes values in $\mu_{41}$ and is obtained from $z$
by a constant rotation on each fiber. It preserves the complete
$41$-gon there and, with $v$, gives a forbidden logarithm.
This excludes the family for every $b\ge1$.
\end{proof}

\begin{lemma}[$5$-power family]\label{lem:205-5-family}
For every $a\ge1$, there is no homomorphism
$(u,v):\Z^d\to\Z_{5^a}\times\Z_{41}$ that is injective on $T$ and
whose image of $T$ has tiling complement
$\{0,\ldots,5^{a-1}-1\}\times\{0\}$.
\end{lemma}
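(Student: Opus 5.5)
Throughout, write $L=5^{a-1}$. The plan is to turn the box-complement condition into the fiber condition of \cref{lem:205-computations}(i). Then the saturation and height data of \cref{lem:205-computations}(iii),(iv) should bring every exponent $a$ down to $a\le3$. The cases $a\le3$ are then excluded by (i) together with the remark after \cref{lem:205-computations}.

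\emph{Step 1: the fiber structure.} Fix $b\in\Z_{41}$. Since the complement is $\{0,\ldots,L-1\}\times\{0\}$, the $u$-values of the labels with $v(t(m))=b$ tile $\Z_{5^a}$ by an interval of length $L$. Hence each fiber has exactly five labels. The translates of an interval of length $L$ that tile $\Z_{5L}$ must have starts equally spaced by $L$, so these five $u$-values form one coset of $L\Z_{5^a}$. In particular, all same-fiber differences satisfy
\[
 u(t(m))-u(t(n))\in L\Z_{5^a},
\]
and this group is $\cong\Z_5$.

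\emph{Step 2: small exponents.}
\begin{itemize}
\item If $a=1$, then $(u,v)$ is bijective on $T$ into $\Z_5\times\Z_{41}\cong\Z_{205}$, which the remark after \cref{lem:205-computations} excludes.
\item If $a=2$, replace $u$ by $5u$ into $\Z_{125}$, and if $a=1$ one may equally use $25u$. Each fiber then becomes a coset of $25\Z_{125}$ with five distinct values, which is excluded by \cref{lem:205-computations}(i).
\item If $a=3$, the hypothesis is literally the configuration excluded by \cref{lem:205-computations}(i).
\end{itemize}
So it remains to show that $a\ge4$ is impossible, or reduces to one of these cases.

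\emph{Step 3: reduction for $a\ge4$.} First normalize $v(e_2)=1$ by a unit, as in the proof of (i); the case $v(e_2)=0$ is excluded by the same collision argument using the labels $1,2,32$. On the core, every $v$-fiber then has at most five labels, so \cref{lem:205-computations}(iii) makes $D_0(v)$ $5$-saturated. The map $u\bmod L$ is constant on fibers, so it kills $D_0(v)$ and factors through $\Z^6/D_0(v)$. This quotient has no $5$-torsion. I will split according to the rank of $D_0(v)$.

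\begin{itemize}
\item \emph{Rank six.} Here $\Z^6/D_0(v)$ is finite of order prime to $5$, so $u\equiv0\pmod L$ on the whole core. Each tail label $rc$ lies in the fiber of some core label, so every tail prime weight satisfies $u(e_r)\equiv-u(t(c))\equiv0\pmod L$. Dividing $u$ by $L$ then gives an injective image in $\Z_5\times\Z_{41}$, which is the excluded case $a=1$.
\item \emph{Rank five.} Here $u\bmod L=\lambda H\bmod L$ on the core, where $H$ is the primitive functional annihilating $D_0(v)$.
\end{itemize}

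To apply \cref{lem:205-computations}(iv) in the rank-five case, I must check that $v$ belongs to its catalog. Reducing $u$ modulo $5$, after dividing by the appropriate power of $5$, should give a core map to $\Z_5\times\Z_{41}$ that is injective on $T_0$, hence a core map of the form considered in (iv). This is the point where saturation is used first, as the paper indicates. Then (iv) supplies $H$ with
\[
 \max_{T_0}H-\min_{T_0}H\le58\quad\text{and}\quad
 \max_{c\le12}H(t(c))-\min_{c\le12}H(t(c))\le29.
\]

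\emph{Step 4: bounding $a$ (main obstacle).} Each fiber must realize all five top digits modulo $5^a$. At the same time, the core values modulo $L$ are confined to $\lambda\cdot[\min H,\max H]$, an interval of at most $59$ residues. For $a\ge4$ this cannot supply enough distinct residue classes modulo $L=5^{a-1}\ge125$ across the $41$ fibers. Tail blocks are translates of cofactor multisets, so they add at most $30$ further height values, and the same obstruction should apply to them.

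I expect the conclusion to be that $\lambda\equiv0$ modulo $5^{a-3}$. If so, $u$ factors through a multiplication that lowers $a$ to at most $3$, returning to Step 2.

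This counting step is where I expect the real difficulty. It has to combine the width bounds with the congruences $H_r\equiv H_2v(e_r)\pmod{41}$ so that the $41$-coordinate stays consistent after rescaling. If a direct count fails, the fallback is to reinterpret the rank-five case as an integer height model. Splitting heights into at most four or six levels would land in the doubled-height or six-height configurations of \cref{lem:205-computations}(ii), exactly as in the proof of \cref{lem:205-phases}.
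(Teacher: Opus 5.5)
\textbf{Verdict: genuine gap.} Your Steps 1 and 2, the normalization $v(e_2)=1$, and the use of \cref{lem:205-computations}(iii) are fine. However, Step 4, which carries the whole lemma for $a\ge4$, has no working mechanism.

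\textbf{Why the pigeonhole fails.} Your pigeonhole rests on a false premise. The complement $\{0,\ldots,L-1\}\times\{0\}$ constrains each fiber separately: a common residue modulo $L$ and five distinct top digits. It imposes nothing between fibers, so many fibers may share one residue modulo $L$. For instance, $u=L\bar u$ with $(\bar u,v)$ bijective from $T$ onto $\Z_5\times\Z_{41}$ would meet every hypothesis with all residues zero. Only the computation in \cref{lem:205-computations}(i) excludes such shapes, so no count of residues available in $\lambda[\min H,\max H]$ can bound $a$.

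The hoped-for congruence $\lambda\equiv0\pmod{5^{a-3}}$ does not follow from the width bounds. Even a congruence on core values would not by itself give a lower-exponent homomorphism on all of $\Z^d$, because the tail prime weights are uncontrolled. The fallback to the height models of \cref{lem:205-computations}(ii) is unrelated; those enter only in \cref{lem:205-phases}.

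\textbf{A smaller hole in the rank-six case.} The $98$ core labels may occupy as few as $20$ of the $41$ fibers, so a tail label need not share a fiber with a core label. This is repairable: first subtract the fiberwise-constant map $\sum_{r\ge17}\tilde\rho_r x_r$, where $\tilde\rho_r\in\{0,\ldots,L-1\}$ lifts $u(e_r)\bmod L$, and only then divide by $L$. As written, the step is not justified.

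\textbf{The missing idea.} You need to control the $5$-primary torsion of $Q=\Z^d/D$, where $D$ is generated by \emph{all} same-fiber differences in $\Z^d$, tail included.
\begin{enumerate}
\item After localizing at $5$, saturation collapses the core to multiples $H(t(m))\tau$ of one element.
\item The decomposition $t(rc)=e_r+t(c)$ yields an auxiliary group $P$, with maps $Q\to P\to Q_{(5)}$ composing to localization. $P$ has generators $F_j$ ($j\in\Z_{41}$, $F_0=0$) and $\tau$, with relations $F_j-F_i=h\tau$, i.e.\ a multigraph on $41$ vertices.
\item The width bounds give $|h|\le58$. The congruences $H_r\equiv H_2v(e_r)\pmod{41}$ give $h\equiv H_2(j-i)\pmod{41}$.
\item Each fundamental cycle, of length at most $41$, therefore yields $k\tau=0$ with $k=41\ell$ and $|\ell|\le58<125$, so $v_5(k)\le2$. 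The mod-$41$ congruence is essential here: without it, $|k|\le2378$ would permit $v_5(k)=4$.
\item Hence every invariant factor of $D$ has $5$-adic valuation at most two.
\end{enumerate}

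\textbf{Concluding the argument.} Write $D=\langle d_1b_1,\ldots,d_sb_s\rangle$ with $d_i=5^{t_i}q_i$ and $t_i\le2$, and write $u(b_i)=5^{a-1-t_i}c_i$. Set $u'(b_i)=5^{2-t_i}c_i\in\Z_{125}$ for $i\le s$, and $u'(b_i)=0$ otherwise. Then $u'(d)=25r$ whenever $u(d)=5^{a-1}r$ with $d\in D$. All within-fiber differences lie in $D$, so $(u',v)$ is excluded directly by \cref{lem:205-computations}(i). You never need to lower $a$ through a statement about $\lambda$.
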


\begin{proof}
Suppose such an image $(u,v)$ exists.
Write $T_j=\{t\in T:v(t)=j\}$. Each $T_j$ consists of five points,
and their $u$-values have a common residue modulo $5^{a-1}$ and
five distinct \emph{top digits}: after subtracting that common residue,
they are $5^{a-1}j$ for $j\in\Z_5$. The eight powers of two therefore force
$v(e_2)\ne0$; normalize $v(e_2)=1$.
Let $D\le\Z^d$ be generated by all differences $t-t'$ with
$v(t)=v(t')$, and let $D_0\le\Z^6$ be generated by the corresponding
core differences.

On $D_0$, the values of $u$ lie in the subgroup
$5^{a-1}\Z_{5^a}\simeq\Z_5$.
Write $\bar u:D_0\to\Z_5$ for their top digits.
By \cref{lem:205-computations}(iii), every nonzero invariant factor
of $D_0$ is prime to $5$. Expressing $D_0$ in Smith normal form and
dividing by these factors in $\Z_5$ extends $\bar u$ to $\Z^6$.
Together with $v$, this extension distinguishes all core labels.
Thus its cyclic image belongs to the catalog in
\cref{lem:205-computations}(iv). In particular, the rank-four maps
allowed in (iii) cannot arise here.

Put $Q=\Z^d/D$ and
$Q_{(5)}=Q\otimes_{\Z}\Z_{(5)}$, where
$\Z_{(5)}=\{a/b\in\Q:5\nmid b\}$.
Passing to $Q_{(5)}$ makes multiplication by every integer prime to
$5$ invertible. Since the invariant factors of $D_0$ are prime to $5$,
the localized core quotient is zero in the rank-six case and is free
of rank one in the rank-five case. Thus an element $\tau\in Q_{(5)}$ satisfies
\[
 [t(m)]=H(t(m))\tau\qquad(m\in I_0).
\]
Here $H$ is the primitive height in (iv), extended by zero on the
remaining prime coordinates. In the rank-six case take $H=0$ and
$\tau=0$. The element $\tau$ generates the image of the localized core, but
need not have infinite order after the tail relations are imposed.

To control the tail primes, form an abelian group $P$ with generators
$F_j$ for $j\in\Z_{41}$ and one generator $\tau$, subject to $F_0=0$
and the relations
\[
 F_{v(t(m))}=H(t(m))\tau\quad(m\in I_0),
\]
\[
 F_{v(e_r)+v(t(c))}-F_{v(e_r)}=H(t(c))\tau
 \quad\left(r\ge17\text{ prime},\ 1\le c\le\lfloor205/r\rfloor\right).
\]
The unique core--tail decomposition gives a homomorphism
$Q\to P$ sending $[t(m)]$ to $F_{v(t(m))}$.
Indeed, the core relations preserve addition of exponent vectors,
and every other label is $t(rc)=e_r+t(c)$ with $c\le12$.
Conversely, evaluating $F_j$ at the common image in $Q_{(5)}$ of the
labels in $T_j$, and evaluating $\tau$ at the element above, gives
$P\to Q_{(5)}$. The composite $Q\to P\to Q_{(5)}$ is localization.

View each relation $F_j-F_i=h\tau$ as an oriented edge of a
multigraph on the $41$ residues; retain loops and parallel edges.
Every edge satisfies
\[
 |h|\le58,\qquad h\equiv H_2(j-i)\pmod{41}.
\]
Choose a spanning forest. Its relations express each $F_j$ as a
component offset plus an integer multiple of $\tau$.
Every remaining edge therefore gives a relation $k\tau=0$.
Its forest path has at most $40$ edges, so its fundamental cycle has
at most $41$, including the cases of a loop or parallel edges. Hence
\[
 41\mid k,\qquad |k|\le41\cdot58.
\]
If $k\ne0$, writing $k=41\ell$ gives $0<|\ell|\le58<125$, and thus
$v_5(k)\le2$. Eliminating the forest variables identifies $P$ with a direct sum
of a free abelian group and $\Z/J$, where $J$ is the ideal generated
by the integers $k$ from the remaining edges. The subgroup of elements
of $\Z/J$ killed by a power of $5$ is therefore killed by $25$.
If every $k$ is zero, the cyclic group is also free.

Let $x\in Q$ be killed by a power of $5$. Its image in $P$ is killed
by the same power, so $25x$ maps to zero in $Q_{(5)}$.
Some integer prime to $5$ therefore kills $25x$ in $Q$.
B\'ezout's identity with the power of $5$ killing $x$ gives $25x=0$.
Equivalently,
\[
 5^n x\in D\quad\Longrightarrow\quad25x\in D
 \qquad(x\in\Z^d, n\ge0).
\]
Thus the nonzero invariant factors of $D$ have five-adic valuation at most two.

Choose a basis $b_1,\ldots,b_d$ putting $D$ in Smith normal form:
$D=\langle d_1b_1,\ldots,d_sb_s\rangle$ and
$d_i=5^{t_i}q_i$, where $t_i\le2$ and $5\nmid q_i$.
Suppose first that $a>3$. Since $5u(d_i b_i)=0$,
there is $c_i\in\Z_{5^{t_i+1}}$ such that
$u(b_i)=5^{a-1-t_i}c_i$. Define
$u'(b_i)=5^{2-t_i}c_i\in\Z_{125}$ for $i\le s$,
and put $u'(b_i)=0$ on the unused basis vectors.
For $a\le3$, instead put $u'=5^{3-a}u$.
Both constructions preserve the top-digit map on $D$:
\[
 u'(d)=25r\quad\text{whenever}\quad
 u(d)=5^{a-1}r,\qquad d\in D,\ r\in\Z_5.
\]
All within-fiber differences lie in $D$. Hence the five $u'$-values
in each fiber remain distinct and have a common residue modulo $25$.
This contradicts \cref{lem:205-computations}(i), excluding the family
for every $a\ge1$.
\end{proof}

\begin{proof}[Proof of \cref{thm:205}]
A $205$-coloring gives a tiling by $T$ by \cref{prop:tiling}.
The three small periods and \cref{lem:205-orbit} give a periodic
tiling complement, hence an injective tiling image in a finite abelian group.
\Cref{lem:205-pq-quotient} gives an injective image with one of the
interval complements excluded by \cref{lem:205-41-family,lem:205-5-family}.
Therefore
$\chi(B_{205})\ge206$. The explicit logarithm of length $206$ in
\cref{app:positive}, combined with \cref{lem:group}, gives the reverse inequality.
\end{proof}

\section{A noncyclic coloring at 208}\label{sec:208}

Absence of a cyclic logarithm does not by itself obstruct an optimal
coloring. The following explicit example also corrects the inclusion of
$208$ in the list of groupless numbers in~\cite[Table~5.1]{CCP}.

\begin{proposition}\label{prop:208}
The arithmetic graph $B_{208}$ has chromatic number $208$.
\end{proposition}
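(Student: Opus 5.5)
The lower bound $\chi(B_{208})\ge208$ holds because $B_{208}$ contains the cascade $[208]$ as a clique, so the only content is a proper $208$-coloring. I will produce it from a group logarithm in the noncyclic group $G=\Z_{104}\times\Z_2$ and then invoke \cref{lem:group}. The abstract advertises exactly this group, and a cyclic logarithm is unavailable by~\cite{BM}. Concretely, I will assign to each of the primes $r\le208$ a weight $h(r)\in\Z_{104}\times\Z_2$ and extend additively by $h(m)=\sum_r v_r(m)h(r)$. These weights will be listed in \cref{tab:208}. The claim then reduces to checking that this $h$ is a bijection from $[208]$ onto $G$. Once that holds, \cref{lem:group} gives the coloring $c(a)=\sum_r v_r(a)h(r)$, and the argument from the proof of \cref{thm:prime} shows it is proper: writing $a=gx$ and $b=gy$ with $g=\gcd(a,b)$ and distinct $x,y\in[208]$, we get $c(a)-c(b)=h(x)-h(y)\ne0$.

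To find the weights I will run a search structured like the one in the computational verification of \cref{lem:205-computations}. First, $h(2)$ must have order at least $8$, since the labels $1,2,\dots,128$ must be distinct. I normalize $h(2)$ by an automorphism of $G$, for instance to $(1,0)$ or $(1,1)$. Next I assign the weights at the core primes $2,3,5,7,11,13$, rejecting a branch as soon as two evaluated core labels collide. Then I treat each prime block $r\cdot\{1,\dots,\lfloor208/r\rfloor\}$ for $17\le r\le104$ as a translate of an already determined cofactor image and choose a free shift for it. Finally the primes in $(104,208]$ are singletons that fill whatever elements of $G$ remain. I expect the $\Z_2$ coordinate to be the decisive extra freedom. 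Reduced modulo the $\Z_8$ part, the image must behave like a tiling by $\Z_8\times\Z_2$ rather than $\Z_{16}$, and this is presumably what allows a solution here even though no cyclic one exists.

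The main obstacle is finding a solution at all, since the search space is large and~\cite{CCP} listed $208$ as groupless. If a direct search stalls, I will first fix the image of the $\{2,3\}$-smooth labels so that it has a convenient coset structure in $\Z_8\times\Z_2\times\Z_{13}$ and then extend block by block. Once a table is in hand, verification is routine: compute $h(m)$ for all $m\le208$ and confirm that the $208$ values are distinct. The paper's ancillary checker performs exactly this check. The proof as written will simply present the table, state that the values are distinct, and cite \cref{lem:group}.
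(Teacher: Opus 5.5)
Your proposal is correct and is essentially the paper's proof: the clique $[208]$ gives the lower bound, and the weights of \cref{tab:208} in $\Z_{104}\times\Z_2$ extend additively to a bijection $[208]\to\Z_{104}\times\Z_2$. By \cref{lem:group} (equivalently your $\gcd$ argument, or the paper's observation that every edge lies in a rainbow cascade) the induced coloring is therefore proper. The search plan is heuristic and plays no role in the proof. Automorphisms cannot move an $h(2)$ of order $8$, $13$, $26$ or $52$ to $(1,0)$, so your normalization would not give an exhaustive search; this does not matter here, because the table itself is the complete certificate.
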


\begin{proof}
Let $G=\Z_{104}\times\Z_2$.
For every prime $p\le208$, let $w_p\in G$ be the weight in
\cref{tab:208}, and define
\[
 c(a)=\sum_{\substack{p\le208\\p\text{ prime}}}v_p(a)w_p
 \qquad(a\ge1).
\]
Evaluating the displayed weights on the factorizations of
$1,\ldots,208$ gives every element of $G$ exactly once. Since
$c(am)=c(a)+c(m)$, every cascade $\{a,2a,\ldots,208a\}$ is rainbow.
Every edge lies in a cascade, so $c$ is a proper $208$-coloring.
The clique $[208]$ gives the reverse inequality.
\end{proof}

\begin{table}[htbp]
\centering
\caption{Weights in $\Z_{104}\times\Z_2$
for the $208$-coloring.}\label{tab:208}
\renewcommand{\arraystretch}{1.08}
\setlength{\tabcolsep}{8pt}
\begin{tabular}{rr@{\hspace{12pt}}rr@{\hspace{12pt}}rr}
\toprule
$p$ & $w_p$ & $p$ & $w_p$ & $p$ & $w_p$\\
\midrule
2   & $(1,0)$  & 59  & $(65,1)$  & 137 & $(96,0)$\\
3   & $(81,1)$ & 61  & $(14,1)$  & 139 & $(57,1)$\\
5   & $(45,1)$ & 67  & $(56,0)$  & 149 & $(84,0)$\\
7   & $(4,1)$  & 71  & $(40,0)$  & 151 & $(32,1)$\\
11  & $(75,0)$ & 73  & $(27,1)$  & 157 & $(73,0)$\\
13  & $(14,0)$ & 79  & $(42,1)$  & 163 & $(34,1)$\\
17  & $(98,0)$ & 83  & $(55,1)$  & 167 & $(99,1)$\\
19  & $(90,1)$ & 89  & $(70,0)$  & 173 & $(9,1)$\\
23  & $(70,1)$ & 97  & $(37,0)$  & 179 & $(74,1)$\\
29  & $(19,0)$ & 101 & $(102,0)$ & 181 & $(61,1)$\\
31  & $(43,0)$ & 103 & $(11,1)$  & 191 & $(63,0)$\\
37  & $(53,0)$ & 107 & $(13,1)$  & 193 & $(25,1)$\\
41  & $(81,0)$ & 109 & $(78,1)$  & 197 & $(51,1)$\\
43  & $(33,0)$ & 113 & $(2,1)$   & 199 & $(38,1)$\\
47  & $(64,0)$ & 127 & $(29,1)$  &     & \\
53  & $(26,0)$ & 131 & $(69,1)$  &     & \\
\bottomrule
\end{tabular}
\end{table}

\begin{proof}[Completion of the proof of \cref{thm:main}]
The results proved above establish the assertions at $205,208,211$.
Blackburn and McKee~\cite{BM} record cyclic logarithms for every
$n\le204$ except $195$. By \cref{lem:group}, $\chi(B_n)=n$ at all
these parameters, so the least counterexample is either $195$ or $205$.
Since $197$ is prime, the prime-minus-one construction
\cite[Theorem~1]{CCP} gives $\chi(B_{196})=196$.
The inclusion $B_{195}\subseteq B_{196}$ and the clique $[195]$
therefore give $195\le\chi(B_{195})\le196$.
\end{proof}

\FloatBarrier
\begin{samepage}
\section{Equivalent coloring conjectures}\label{sec:decorations}

We record the equivalence of arithmetic graph coloring, rainbow
cascades, cascade-list colorings and ironic decorations, and apply it
to the counterexamples at $205$ and $211$.

An \emph{ironic decoration} of a graph $G$ is a labeling $f$ for which
$d_G(u)f(u)\ne d_G(v)f(v)$ on every edge, where $d_G(v)$ is the degree
of $v$. A \emph{cascade-list coloring} is a proper vertex coloring whose
value at $v$ belongs to its prescribed list $r_v[n]$.

The four assertions below are the fixed-$n$ forms of Conjectures~6--9
in Grytczuk's survey~\cite{Gryt}. The implications from rainbow
cascades to cascade-list colorings and then to ironic decorations,
and the equivalence with arithmetic graph coloring, are given
there~\cite[\S2.6, \S3.1 and \S3.3]{Gryt}.
Bosek et al.~\cite[Theorem~6]{Bosek} also derive ironic decorations
from arithmetic graph colorings, calling them \emph{fictional colorings}.
The proof below includes the converse reductions and identifies the
constructions from the literature that they use.

\begin{proposition}[Equivalent coloring assertions]\label{prop:cascade-equivalence}
For each fixed $n\ge2$, the following statements are equivalent.
\begin{enumerate}
\renewcommand{\theenumi}{\roman{enumi}}
\renewcommand{\labelenumi}{\textup{(\theenumi)}}
\item Every finite graph $G$ with $\chi(G)=n$ has an ironic decoration
$f:V(G)\to[n]$.
\item Every finite graph $G$ with $\chi(G)=n$ has a proper coloring
from any prescribed lists $L(v)=r_v[n]$, where $r_v\in\N$.
\item There is a coloring $c:\N\to[n]$ making every cascade $r[n]$ rainbow.
\item $\chi(B_n)=n$.
\end{enumerate}
All four statements also hold for $n=1$.
\end{proposition}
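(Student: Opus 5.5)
The plan is to prove the cycle (iii)$\Rightarrow$(ii)$\Rightarrow$(i)$\Rightarrow$(iii). The equivalence (iii)$\Leftrightarrow$(iv) was already observed in the introduction: every edge of $B_n$ lies in a cascade, and every cascade is an $n$-clique. For $n=1$ everything is trivial: $B_1$ has no edges, the cascade $r[1]$ is a single point, and a graph with $\chi=1$ is edgeless.

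\emph{(iii)$\Rightarrow$(ii).} Let $G$ satisfy $\chi(G)=n$, and fix a proper coloring $c_0:V(G)\to[n]$. Let $c:\N\to[n]$ be a coloring making every cascade rainbow. Then $c$ restricted to $r_v[n]$ is a bijection onto $[n]$. Define $x_v$ to be the unique element of $r_v[n]$ with $c(x_v)=c_0(v)$. If $u\sim v$ and $x_u=x_v$, then $c_0(u)=c(x_u)=c(x_v)=c_0(v)$, which contradicts properness of $c_0$. Hence $v\mapsto x_v$ is a proper coloring from the lists $L(v)=r_v[n]$.

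\emph{(ii)$\Rightarrow$(i).} Apply (ii) with the lists $d_G(v)[n]$. Isolated vertices are unconstrained, and we give them $f=1$. For every other vertex the chosen color has the form $d_G(v)f(v)$ with $f(v)\in[n]$. Properness of the list coloring is exactly the ironic condition.

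\emph{(i)$\Rightarrow$(iii).} This is the substantial direction, and I argue by contraposition. Suppose no rainbow cascade coloring exists. By compactness (the colorings of each $[N]$ form finite nonempty sets if all finitely many cascade constraints inside $[N]$ are satisfiable, and K\"onig's lemma then gives a global coloring), there is a finite set $R\subset\N$ such that no $c$ makes every cascade $r[n]$ with $r\in R$ rainbow.

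Next I use Kostochka's complete multipartite construction. Take the complete $n$-partite graph whose parts $V_1,\dots,V_n$ each contain one vertex $v_{i,r}$ with list $r[n]$ for every $r\in R$; its chromatic number is $n$. Suppose it had a proper list coloring. The color sets $C_i$ used on distinct parts are disjoint, since any two vertices in different parts are adjacent. Each $C_i$ meets every $r[n]$, because $v_{i,r}\in V_i$. Since $|r[n]|=n$, each $r[n]$ meets each $C_i$ in exactly one element. Coloring $a\in C_i$ by $i$ (and all other integers arbitrarily) would then make every cascade $r[n]$ with $r\in R$ rainbow, a contradiction. So this graph has no cascade-list coloring for these lists.

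To convert the lists into degree lists, I use a degree adjustment. Scaling all lists by a common $M$ does not affect solvability, since $x\mapsto Mx$ maps $r[n]$ bijectively onto $Mr[n]$. Choose $M$ so large that $Mr$ exceeds the degree of $v_{i,r}$ for every $r\in R$. Then attach $Mr-\deg(v_{i,r})$ new private pendant leaves to each $v_{i,r}$. This keeps the chromatic number equal to $n$ when $n\ge2$, and it makes $d(v_{i,r})=Mr$. An ironic decoration $f$ of the enlarged graph would give the proper coloring $v\mapsto d(v)f(v)\in Mr[n]$ on the original multipartite vertices, since the edges among them are unchanged. This contradicts the previous paragraph, so (i) fails.

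I expect the main obstacle to be this last direction: checking that the Kostochka gadget really forces a rainbow coloring, in particular the disjointness-and-counting step, and that the degree padding neither changes $\chi$ nor introduces constraints that would break the reduction.
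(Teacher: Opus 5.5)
Your proof is correct and uses the same ingredients as the paper: the matching-colour choice for (iii)$\Rightarrow$(ii), degree lists for (ii)$\Rightarrow$(i), and, for the converse, Kostochka's complete $n$-partite graph, K\"onig compactness, and pendant-leaf degree padding with scaling by $M$. The only difference is organisational. You fold the paper's separate steps (i)$\Rightarrow$(ii) and (ii)$\Rightarrow$(iii) into one contrapositive (i)$\Rightarrow$(iii) and pad only the Kostochka graph, which directly produces the explicit counterexample graphs described in the subsequent corollary.
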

\end{samepage}

\begin{proof}
The equivalence $\textup{(iii)}\Longleftrightarrow\textup{(iv)}$
is the clique observation preceding~\eqref{eq:cascade-conjecture}.
For $\textup{(iii)}\Rightarrow\textup{(ii)}$, fix a proper
$n$-coloring of $G$ and choose from each list the unique integer
whose color under the rainbow coloring agrees with the color of its
vertex. This is a proper list coloring. For
$\textup{(ii)}\Rightarrow\textup{(i)}$,
isolated vertices can be labeled arbitrarily; the lists at nonisolated
vertices are $d_G(v)[n]$. We give the two converse reductions explicitly.

\emph{From cascade lists to rainbow cascades: \textup{(ii)}$\Rightarrow$\textup{(iii)}.}
The finite step is the complete multipartite construction in
Kostochka~\cite[proof of Theorem~2]{Kostochka}, specialized to cascades.
His \emph{panchromatic} condition requires every set to contain every
color; for $n$-element sets and $n$ colors, this is precisely the
rainbow condition.
Let $\mathcal F$ be a nonempty finite family of length-$n$ cascades.
Form the complete $n$-partite graph with parts
\[
 V_i=\{v_{i,L}:L\in\mathcal F\},\qquad i\in[n],
\]
so vertices in different parts are adjacent and vertices in the same
part are not. Each part is nonempty, hence this graph has chromatic
number $n$. Give $v_{i,L}$ the list $L$, and let $f$ be a proper list
coloring supplied by \textup{(ii)}. Define
\[
 P_i=\{f(v):v\in V_i\}.
\]
The sets $P_i$ are pairwise disjoint, since vertices in different
parts are adjacent. For each $L\in\mathcal F$, its $n$ copies receive
$n$ distinct members of $L$, and therefore exhaust $L$. It follows that
\[
 |L\cap P_i|=1\qquad(L\in\mathcal F,\ i\in[n]).
\]
Assigning color $i$ to the integers in $P_i$ gives a rainbow coloring
of every member of $\mathcal F$.

To pass from finite families to all cascades, we use compactness.
For each $M$, apply the preceding construction to the finite family of cascades
contained in $[M]$, coloring any remaining integers arbitrarily;
an empty family imposes no constraint.
The valid colorings of these initial intervals form a tree under
restriction, with at most $n$ extensions of each coloring and a node
at every depth. K\"onig's infinity lemma gives an infinite compatible
sequence of colorings. Every cascade is contained in some initial
interval, so the resulting coloring of $\N$ proves \textup{(iii)}.

\emph{From ironic decorations to cascade lists: \textup{(i)}$\Rightarrow$\textup{(ii)}.}
We prescribe degrees by attaching leaves, a device also used by
Dehghan, Sadeghi and Ahadi~\cite{DSA} in their study of fictional coloring.
Let $G$ be a finite graph with $\chi(G)=n$, with lists $r_v[n]$.
Choose an integer $M$ larger than every vertex degree in $G$.
For each vertex $v$, attach $Mr_v-d_G(v)$ new vertices, each adjacent
only to $v$, and call the resulting graph $H$. Then
\[
 d_H(v)=Mr_v\qquad(v\in V(G)),\qquad \chi(H)=n.
\]
Indeed, $G$ is a subgraph of $H$, and any $n$-coloring of $G$ extends
to the new degree-one vertices because $n\ge2$.
By \textup{(i)}, choose an ironic decoration $f:V(H)\to[n]$.
For every original edge $uv$, cancellation of $M$ in
\[
 Mr_uf(u)=d_H(u)f(u)\ne d_H(v)f(v)=Mr_vf(v)
\]
gives $r_uf(u)\ne r_vf(v)$. Thus $v\mapsto r_vf(v)$ is a proper
coloring of $G$ from its assigned lists, proving \textup{(ii)}.
\end{proof}

\begin{corollary}\label{cor:decorations}
For each $n\in\{205,211\}$, all four assertions in
\cref{prop:cascade-equivalence} fail. In particular, there are finite
graphs $G,H$ with $\chi(G)=\chi(H)=n$ such that $G$ has an uncolorable
assignment of cascade lists and $H$ has no ironic decoration with
labels in $[n]$. Thus Conjectures~6--9 of~\cite{Gryt} are false.
\end{corollary}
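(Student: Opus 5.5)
The plan is to combine \cref{thm:205} and \cref{cor:211} with \cref{prop:cascade-equivalence}. For $n=205$, \cref{thm:205} gives $\chi(B_{205})=206>205$. For $n=211$, \cref{cor:211} gives $\chi(B_{211})\ge212>211$. In both cases assertion (iv) of \cref{prop:cascade-equivalence} fails. Since $n\ge2$, the proposition applies, so (i), (ii) and (iii) fail as well. Because (i)--(iii) are the fixed-$n$ forms of Conjectures~6--9 of~\cite{Gryt} (with (iii) also covering the arithmetic-graph form), these conjectures are false.

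The substantive point is that the counterexamples are \emph{finite} graphs. Negating (ii) literally gives a finite graph $G$ with $\chi(G)=n$ and lists $r_v[n]$ admitting no proper list coloring. Likewise, negating (i) gives a finite $H$ with $\chi(H)=n$ and no ironic decoration into $[n]$. The statements (i) and (ii) already quantify over finite graphs, so no extra compactness argument is needed at this stage.

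For explicitness, I would also trace where such graphs come from. Since (iii) fails, the compactness step in the proof of (ii)$\Rightarrow$(iii) shows that some finite family $\mathcal F$ of cascades inside an initial interval $[M]$ admits no simultaneous rainbow coloring. If every such finite family were rainbow-colorable, König's lemma would produce a global coloring. Kostochka's complete $n$-partite graph on $\mathcal F$, with lists $L$ at $v_{i,L}$, is then the desired $G$. The leaf-attachment construction applied to $G$, with a fixed $M$ exceeding all degrees, gives $H$. By the proof of (i)$\Rightarrow$(ii), an ironic decoration of $H$ would list-color $G$, so none exists.

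I expect no genuine obstacle. The only care needed is to confirm that the chromatic numbers are exactly $n$: each part $V_i$ is nonempty, and pendant leaves do not raise the chromatic number since $n\ge2$.
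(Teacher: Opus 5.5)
Your proposal is correct and follows the paper's proof. You deduce $\chi(B_n)>n$ for $n=205,211$ from \cref{thm:205} and \cref{cor:211}, invoke \cref{prop:cascade-equivalence}, and take $G$ to be Kostochka's complete $n$-partite graph and $H$ its leaf-attached extension; your tracing through the compactness step, which yields a finite family of cascades in some $[M]$ with no rainbow coloring, just spells out what the paper's one-line remark leaves implicit.
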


\begin{proof}
By \cref{thm:main}, $\chi(B_n)>n$ for $n=205,211$.
Apply \cref{prop:cascade-equivalence}. Its proof allows $G$ to be
chosen complete $n$-partite, and $H$ to be obtained from $G$ by
attaching degree-one vertices.
\end{proof}

\section{Conclusion and open questions}\label{sec:conclusion}

The graphs $B_{205}$ and $B_{211}$ have chromatic number greater
than the clique number given by Graham's theorem and its Farey-sequence
formulation~\cite{BS,Wang}. Although rainbow cascade colorings need
not exist, a weaker balance property holds:
for every fixed $n$, there is a two-coloring of $\N$ in which the two
color counts differ by at most one on every length-$n$
cascade~\cite[Theorem~1]{BG}.

The immediate questions are whether $\chi(B_{195})=195$ or $196$,
and whether $\chi(B_{211})=212$ or $213$.
Forcade--Pollington's obstruction excludes group logarithms at
$195$~\cite{FP}, but does not settle arbitrary colorings. The missing
step at $195$ is a reduction of an arbitrary tiling by $T_{195}$ to a
bounded finite search: the argument at $205=5\cdot41$ uses the
two-prime polygon decomposition to obtain a periodic replacement in
\cref{sec:periodic-replacement}, and then bounds the relevant
prime-power quotients in \cref{sec:205-quotients}. Neither reduction
has been established here for $195=3\cdot5\cdot13$; excluding group
logarithms, or searching quotients up to a prescribed size, therefore
does not exclude a $195$-coloring.

More generally, when does an arithmetic exponent tile that tiles by
translations admit a periodic complement, or a subgroup complement?
Prime cardinality gives affirmative answers. The character equations
connect these questions with vanishing sums of roots of unity~\cite{LL}.
One-dimensional tiling criteria~\cite{CM} do not settle
higher-dimensional periodic replacement, and the periodic tiling
conjecture fails in general~\cite{GT}.

\section*{Contributions and use of AI}
The human authors identified $211$ as a counterexample. They used
GPT-6 Astra (OpenAI) to assist with the argument at $205$, the construction
at $208$, computations, and manuscript preparation. The proof at $205$
is computer-assisted, with its finite arithmetic inputs stated in
\cref{lem:205-computations}; the proof at $211$ uses the published
logarithm obstruction of Blackburn and McKee~\cite{BM}.
The authors independently repeated the exhaustive search at $211$,
confirming that no cyclic logarithm of this length exists.
The authors take full responsibility for the proofs, computations,
and content of this paper.

\clearpage
\appendix
\section{Explicit logarithms for the upper bounds}\label{app:positive}
For $n=206,213$, let $w_{n,p}$ be the weights in \cref{tab:positive} and set
\[
 h_n(m)=\sum_{p\le n,\ p\text{ prime}}w_{n,p}v_p(m)\pmod n.
\]
Direct evaluation gives every residue in $\Z_n$ exactly once, and
valuation additivity gives $h_n(ab)=h_n(a)+h_n(b)$ for $ab\le n$.
Thus these weights define logarithms of lengths $206$ and $213$.
By \cref{lem:group}, they give the upper bounds at $205$ and $211$.
The ancillary checker verifies bijectivity from these weights.
\begin{table}[htbp]
\centering
\caption{Cyclic logarithm weights for the upper bounds. A dash means that the prime exceeds the length.}\label{tab:positive}
\renewcommand{\arraystretch}{1.05}
\setlength{\tabcolsep}{6pt}
\begin{tabular}{rrr@{\hspace{18pt}}rrr@{\hspace{18pt}}rrr}
\toprule
$p$&$w_{206,p}$&$w_{213,p}$&$p$&$w_{206,p}$&$w_{213,p}$&$p$&$w_{206,p}$&$w_{213,p}$\\
\midrule
2&1&1&59&144&39&137&102&91\\
3&10&122&61&159&53&139&108&96\\
5&68&140&67&193&87&149&112&100\\
7&121&81&71&25&192&151&126&111\\
11&93&11&73&55&64&157&166&132\\
13&42&55&79&60&78&163&170&147\\
17&47&118&83&82&98&167&176&150\\
19&172&180&89&106&102&173&181&157\\
23&177&16&97&129&105&179&184&160\\
29&75&36&101&152&165&181&195&170\\
31&117&167&103&197&184&191&200&174\\
37&99&72&107&27&20&193&201&197\\
41&155&114&109&29&30&197&202&198\\
43&64&24&113&33&44&199&205&208\\
47&148&186&127&63&61&211&--&210\\
53&18&200&131&91&75&&&\\
\bottomrule
\end{tabular}
\end{table}

\FloatBarrier


\clearpage
\begin{thebibliography}{99}

\bibitem{Bosek}
B. Bosek, M. D\k{e}bski, J. Grytczuk, J. Sok\'o\l{},
M. \'Sleszy\'nska-Nowak, and W. \.Zelazny,
\emph{Graph coloring and Graham's greatest common divisor problem},
Discrete Mathematics \textbf{341} (2018), no.~3, 781--785.
\href{https://doi.org/10.1016/j.disc.2017.11.006}{doi:10.1016/j.disc.2017.11.006}.

\bibitem{Gryt}
J. Grytczuk,
\emph{From the 1-2-3 conjecture to the Riemann hypothesis},
European Journal of Combinatorics \textbf{91} (2021), 103213.
\href{https://doi.org/10.1016/j.ejc.2020.103213}{doi:10.1016/j.ejc.2020.103213}.

\bibitem{BM}
S.~R. Blackburn and J.~F. McKee,
\emph{Constructing $k$-radius sequences},
Mathematics of Computation \textbf{81} (2012), no.~280, 2439--2459.
\href{https://doi.org/10.1090/S0025-5718-2011-02510-X}{doi:10.1090/S0025-5718-2011-02510-X}.
\href{https://arxiv.org/abs/1006.5812}{arXiv:1006.5812}.

\bibitem{CCP}
A.~E. Caicedo, T.~A.~C. Chartier, and P.~P. Pach,
\emph{Coloring the $n$-smooth numbers with $n$ colors},
Electronic Journal of Combinatorics \textbf{28} (2021), no.~1, Paper~P1.34.
\href{https://doi.org/10.37236/8492}{doi:10.37236/8492}.

\bibitem{Mitchell2019}
L. Mitchell,
\emph{Generating infinitely many satisfactory colorings of positive integers},
Discrete Mathematics \textbf{342} (2019), no.~12, 111602.
\href{https://doi.org/10.1016/j.disc.2019.111602}{doi:10.1016/j.disc.2019.111602}.

\bibitem{Mitchell2023}
L. Mitchell,
\emph{Rainbow Cascades and permutation-labeled hypercube tilings},
Examples and Counterexamples \textbf{3} (2023), 100099.
\href{https://doi.org/10.1016/j.exco.2023.100099}{doi:10.1016/j.exco.2023.100099}.

\bibitem{HK}
P. Horak and D. Kim,
\emph{Algebraic Method in Tilings},
\href{https://arxiv.org/abs/1603.00051v1}{arXiv:1603.00051v1} (2016).

\bibitem{Szegedy}
M. Szegedy,
\emph{Algorithms to tile the infinite grid with finite clusters},
Proceedings of the 39th Annual Symposium on Foundations of Computer Science,
IEEE, 1998, 137--147.
\href{https://doi.org/10.1109/SFCS.1998.743437}{doi:10.1109/SFCS.1998.743437}.

\bibitem{GGRT}
J. Greb\'ik, R. Greenfeld, V. Rozho\v{n}, and T. Tao,
\emph{Measurable tilings by abelian group actions},
International Mathematics Research Notices \textbf{2023}, no.~23, 20211--20251.
\href{https://doi.org/10.1093/imrn/rnad048}{doi:10.1093/imrn/rnad048}.

\bibitem{GT}
R. Greenfeld and T. Tao,
\emph{A counterexample to the periodic tiling conjecture},
Annals of Mathematics \textbf{200} (2024), no.~1, 301--363.
\href{https://doi.org/10.4007/annals.2024.200.1.5}{doi:10.4007/annals.2024.200.1.5}.

\bibitem{BG}
B. Bosek and J. Grytczuk,
\emph{Reflections on the Erd\H{o}s Discrepancy Problem},
\href{https://arxiv.org/abs/2005.14283}{arXiv:2005.14283} (2020).

\bibitem{Graham}
R.~L. Graham,
\emph{Advanced Problem 5749},
American Mathematical Monthly \textbf{77} (1970), no.~7, 775.

\bibitem{BS}
R. Balasubramanian and K. Soundararajan,
\emph{On a conjecture of R.~L. Graham},
Acta Arithmetica \textbf{75} (1996), no.~1, 1--38.
\href{https://doi.org/10.4064/aa-75-1-1-38}{doi:10.4064/aa-75-1-1-38}.

\bibitem{FP}
R.~W. Forcade and A.~D. Pollington,
\emph{What is special about 195? Groups, $n$th power maps and a problem of Graham},
in \emph{Number Theory (Banff, AB, 1988)},
R.~A. Mollin (ed.), de Gruyter, Berlin, 1990, 147--155.
\href{https://doi.org/10.1515/9783110848632-015}{doi:10.1515/9783110848632-015}.

\bibitem{Wang}
L. Wang,
\emph{Farey sequence and Graham's conjectures},
Journal of Number Theory \textbf{229} (2021), 399--404.
\href{https://doi.org/10.1016/j.jnt.2020.10.013}{doi:10.1016/j.jnt.2020.10.013}.
\href{https://arxiv.org/abs/2005.04429}{arXiv:2005.04429}.

\bibitem{CM}
E.~M. Coven and A.~D. Meyerowitz,
\emph{Tiling the integers with translates of one finite set},
Journal of Algebra \textbf{212} (1999), no.~1, 161--174.
\href{https://doi.org/10.1006/jabr.1998.7628}{doi:10.1006/jabr.1998.7628}.

\bibitem{LL}
T.~Y. Lam and K.~H. Leung,
\emph{On vanishing sums of roots of unity},
Journal of Algebra \textbf{224} (2000), no.~1, 91--109.
\href{https://doi.org/10.1006/jabr.1999.8089}{doi:10.1006/jabr.1999.8089}.

\bibitem{Kostochka}
A. Kostochka,
\emph{On a theorem of Erd\H{o}s, Rubin, and Taylor on choosability of
complete bipartite graphs},
Electronic Journal of Combinatorics \textbf{9} (2002), no.~1, Note~N9, 4~pp.
\href{https://doi.org/10.37236/1670}{doi:10.37236/1670}.

\bibitem{DSA}
A. Dehghan, M.-R. Sadeghi, and A. Ahadi,
\emph{Algorithmic complexity of proper labeling problems},
Theoretical Computer Science \textbf{495} (2013), 25--36.
\href{https://doi.org/10.1016/j.tcs.2013.05.027}{doi:10.1016/j.tcs.2013.05.027}.
\href{https://arxiv.org/abs/1701.06669}{arXiv:1701.06669}.

\end{thebibliography}
\end{document}